\newtheorem{theorem}{Theorem}[section]
\newtheorem{lemma}{Lemma}[section]
\theoremstyle{definition}
\newtheorem{remark}{Remark}[section]
\newcommand{\Z}{\mathbb{Z}}
\newcommand{\N}{\mathbb{N}}
\newcommand{\E}{\mathbb{E}}
\newcommand{\prob}{\mathbb{P}}
\newcommand{\1}{\mathbbm{1}}
\numberwithin{equation}{section}
\begin{document}

\title{{Moderate deviations for the self-normalized random walk in random scenery}}

\author{
\normalsize{\textsc{Tal Peretz}\footnote{Technion - Israel Institute of Technology. 
E-mail: tal.peretz@campus.technion.ac.il}\ \ \ }}

\maketitle

\begin{abstract}
Let $G$ be an infinite connected graph with vertex set $V$. Let $\{ S_n: n  \in \N_0 \}$ be the simple random walk on $G$ and let $\{ \xi(v) : v \in V\}$ be a collection of i.i.d. random variables which are independent of the random walk. Define the random walk in random scenery as $T_n = \sum_{k=0}^n \xi(S_k)$, and the normalization variables $V_n = (\sum_{k=0}^n \xi^2(S_k))^{1/2}$ and $L_{n,2}= (\sum_{v \in V}\ell^2_n(v))^{1/2}$. For $G=\Z^d$ and $G=\mathbb T_d$, the $d$-ary tree, we provide large deviations results for the self-normalized process $T_n \sqrt{n}/(L_{n,2}V_n)$ under only finite moment assumptions on the scenery.
\newline
\newline
\emph{Keywords and phrases.} Moderate deviations; Self-normalized partial sums; Random walk in random scenery; Local times
\newline
MSC 2010 \emph{subject classifications.} Primary 60F10; Secondary 60G50, 60K37.
\end{abstract}

\section{Introduction}
Let $ G$ be an infinite connected graph with vertex set $ V$ and let $\{S_n: n \in \N_0 \}$ be the simple random walk on $G$ started at a distinguished vertex  $o \in V$. Denote the law and expectation of this walk by $P$ and $E$. Let $\{\xi(v): v \in V \}$ be independent copies of a random variable $\xi$, which we denote as the scenery. Denote the law of the scenery by $\prob$ and the expectation with respect to this law by $\E$. We will always assume $\E\xi = 0$ and $\sigma^2 = \E\xi^2>0$. The random walk in random scenery (RWRS) is the process $\{T_n: n \in \N_0\}$ defined by
\begin{align*}
    T_n = \sum_{k=0}^n \xi(S_k) = \sum_{v \in V}\ell_n(v)\xi(v),
\end{align*}
where $\ell_n(v) = \sum_{k=0}^n \1{\{S_k=v\}}$ is the local time of $v$ at time $n$. This process was introduced for the case $G=\Z^d$ by Kesten and Spitzer \cite{Kesten1979}, and by Borodin \cite{Borodin1982} independently and at the same time in order to introduce new scaling and self-similar laws. However for $d\geq 3$, when the random walk by time $n$ visits most points a constant amount of times, \cite{Kesten1979} showed under appropriate assumptions on the distribution of $\xi$ that $T_n/\sqrt{n}$ converges in distribution to a Gaussian random variable. More recently, large and moderate deviations of $T_n$ have been studied for $G= \Z^d$ in \cite{Asselahnote,Asselah,Feng,FLEISCHMANN2008,gantert2006deviations}.  Fleischmann, Mort\"{e}rs and Wachtel \cite{FLEISCHMANN2008} proved a moderate deviations principle (MDP) for $d \geq 3$. Assuming Cram\'{e}r's condition, i.e. that there exists $\theta>0$ such that $\E e^{\theta \vert \xi \vert}<\infty$, they showed that 
\begin{align}\label{RWRSmoderatedeviations}
    \lim_{n\rightarrow \infty}y_n^{-2} \log\prob \otimes P(T_n/\sqrt{n} \geq y_n) = -\frac{1}{2 \sigma^2(2G(0)-1)}
\end{align}
for $y_n = o(n^{1/6})$, where $G(\cdot)$ is the Green's function of the random walk. We write $a_n = o(b_n)$ or $a_n \ll b_n$ if $\lim_{n \to \infty }a_n/b_n =0$ for positive sequences $a_n$ and $b_n$. In contrast with moderate deviations of sums of i.i.d. random variables, in \cite{Asselah} it was shown that this regime is maximal under Cram\'{e}r's condition. That is, more assumptions need to be made on the scenery in order to get moderate deviations when $y_n$ grows faster than $n^{1/6}$. 

There has been a recent interest in proving large deviations for sums of i.i.d. random variables under minimal moment assumptions. It is well understood that if one replaces the normalization constant by self-normalization, this is possible. In  \cite{shao1997self}, Shao provided self-normalized large and moderate deviations for the partial sum of i.i.d. random variables, while only making assumptions on the second moment. In \cite{Feng}, Feng, Shao and Zeitouni extended this framework to RWRS by proving a Cr\'{a}mer type moderate deviations. Define 
\begin{align*}
V_n^2 = \sum_{k=0}^n \xi^2(S_k) = \sum_{v \in V}\ell_n(v)\xi^2(v)
\end{align*}
and
\begin{align*}
L^2_{n,2} =  \sum_{v \in V}\ell^2_n(v).
\end{align*}
For the simple random walk on $\Z^d$ for $d\geq 3$, it is known (see \cite{FLEISCHMANN2008, Kesten1979}) that 
\begin{align*}
T_n/\sqrt{n} &\xrightarrow{d} N(0,\sigma^2(2G(0)-1)), \enspace L^2_{n,2}/n \xrightarrow{p} 2G(0)-1
 \enspace \text{and} \enspace V_n^2/n \xrightarrow{p} \sigma^2,
\end{align*}
so that we have the self-normalized central limit theorem 
\begin{align*}
\frac{T_n \sqrt{n}}{V_n L_{n,2}} \xrightarrow{d} N(0,1).
\end{align*}
 In \cite{Feng}, they proved that if $d \geq 4$ and there exists $\alpha>0$ and $c_\alpha>0$ such that $\prob(\xi \geq t) \leq 2 e^{-c_\alpha t^\alpha}$ for $t>0$, then
\begin{align}\label{SN-MDP-Zd}
 \prob \otimes P\left(\frac{T_n \sqrt{n}}{V_n L_{n,2}} \geq x\right)\sim 1-\Phi(x)
\end{align}
uniformly for $x \in [0,O(n^\tau)]$ and any $0< \tau < \alpha/(6 \alpha + 4)$. Here $\Phi(\cdot)$ is the standard normal distribution function, and we write $a_n \sim b_n$ if $\lim_{n \to \infty} a_n/b_n = 1$ for positive sequences $a_n$ and $b_n$. By self-normalizing, \cite{Feng} was able to achieve a MDP while only assuming sub-exponential tails on the scenery, which contrasts \eqref{RWRSmoderatedeviations}. However, for a suitable range of $y_n$, one would expect
\begin{align}\label{quadraticMD}
-\infty<\liminf_{n\to \infty}y_n^{-2} \log \prob \otimes P\left(\frac{T_n \sqrt{n}}{V_n L_{n,2}} \geq y_n\right) \leq \limsup_{n\to \infty}y_n^{-2} \log \prob \otimes P\left(\frac{T_n \sqrt{n}}{V_n L_{n,2}} \geq y_n\right) < 0
\end{align}
under only finite moment assumptions on the scenery, i.e. $\E \vert \xi \vert^\kappa < \infty$ for some fixed $\kappa>0$. In the same paper, the authors showed that if $\xi$ has the probability density function $\frac{1}{2}\alpha (1+ \vert t \vert)^{-1-\alpha}$ for some $\alpha>0$ and $(\log n)^{1/2} \ll y_n \ll n^{1/2}$, then
\begin{align}\label{Zdlowerbound}
\liminf_{n \to \infty} y_n^{-2d/(d+2)} (\log n)^{-2/(d+2)} \log \prob \otimes P\left(\frac{T_n \sqrt{n}}{V_n L_{n,2}} \geq y_n\right) > - \infty.
\end{align}
Since $y_n^2 \gg y_n^{2d/(d+2)} (\log n)^{2/(d+2)} $, this lower bound shows that moderate deviations of the form \eqref{quadraticMD} does not hold when $y_n \gg (\log n)^{1/2}$ if one is only to assume finite moment conditions on $\xi$. 

The lower bound is achieved by considering the event in which the random walk occupies the ball $\{z \in \Z^d: \vert z \vert \leq R \}$ for time $y_n^2$, and taking each of the scenery values inside the ball to be of size $O(n)$. The probability of the former event is roughly $\exp(-y_n^2/R^2)$, and for the latter event  $n^{-R^d} = \exp(-R^d \log n)$. Since the random walk and the scenery are independent of one other, we get the lower bound $\exp(-y_n^2/R^2 - R^d \log n)$ which, optimized over $R$, yields \eqref{Zdlowerbound}. From this example, we observe that deviations of the self-normalized RWRS depend on the interplay between the scenery and the random walk. Furthermore, we see that this process is sensitive to the correlation of the local times due to the heavy tails of the scenery.

Motivated by this phenomenon, in this paper we study self-normalized moderate deviations for graphs other than the lattice. We expect that for graphs where the simple random walk has weakly-dependent local times, moderate deviations of the form \eqref{quadraticMD} are attainable. A natural candidate for such graphs are trees, since regeneration epochs of the random walk on the tree have exponential tails (see Section \ref{timesection} for definitions). Lastly, we also provide an upper bound that complements \eqref{Zdlowerbound}. 
\section{Main results}
Let $\mathbb T_d$ be the $d$-ary tree rooted at $o$, i.e. $\deg(v) = d+1$ for $v \neq o$ and $\deg(o)=d$. In this paper we will always assume $d \geq 2$. Since the simple random walk on $\mathbb T_d$ is transient, it follows that for $G=\mathbb T_d$ we have $T_n/\sqrt{n}$ converges weakly to a normal random variable. The following theorem provides precise asymptotics for the self-normalized RWRS on $\mathbb T_d$ while only making finite moment assumptions on the scenery.
\begin{theorem} \label{theorem}
	Let $G=\mathbb T_d$ and let $y_n$ be a positive sequence such that $y_n \to \infty$.
\begin{enumerate}
	\item Suppose that $\E \xi^{4} < \infty$ and $y_n = o(n^{1/6})$. We then have
	\begin{align}\label{upperbound}
	\limsup_{n \to \infty}y_n^{-2}\log \prob \otimes P \left(\frac{T_n \sqrt{n}}{V_n L_{n,2}} \geq y_n\right) \leq -\frac{c_d}{2},
	\end{align}
	where $c_d$ is a positive constant independent of $n$, and $c_d \uparrow 1$ as $d \to \infty$. 
	\item Suppose $\E \xi^6 < \infty$ and $y_n = o(n^{1/6})$. We then have
	\begin{align} \label{lowerbound}
	\liminf_{n \to \infty} y_n^{-2}\log \prob \otimes P \left(\frac{T_n \sqrt{n}}{V_n L_{n,2}} \geq y_n\right) \geq -\frac{1}{2}.
	\end{align}
\end{enumerate}
\end{theorem}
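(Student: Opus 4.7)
My plan is to condition on the random walk and reduce both parts of the theorem to statements about the conditional self-normalized tail of the weighted sum
\begin{equation*}
\frac{T_n}{V_n} = \frac{\sum_v \ell_n(v)\xi(v)}{\bigl(\sum_v \ell_n(v)\xi^2(v)\bigr)^{1/2}}
\end{equation*}
against the threshold $y_n L_{n,2}/\sqrt n$, the scenery entering, given the walk, as independent mean-zero terms. A preparatory step is to control the local-time geometry: using the regeneration structure of the simple random walk on $\mathbb T_d$ from Section~\ref{timesection}, successive level-record times are i.i.d.\ with exponential tails by the tree's positive speed, so on a walk event $R_n$ of $P$-probability tending to $1$ we have $L_{n,2}^2/n$ close to a constant $\gamma_d > 0$ and $\max_v \ell_n(v) \leq C\log n$, giving a Lindeberg-type ratio $\max_v \ell_n(v)^2/L_{n,2}^2 = O((\log n)^2/n) = o(1)$.

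For the lower bound, I would fix a walk realization in $R_n$ and run a Shao-type \cite{shao1997self} truncation plus exponential-tilt argument on the scenery. Truncate each $\xi(v)$ at level $M_n$ slightly below $y_n \sqrt n$ and tilt the truncated scenery by $\theta_n$ of order $y_n/(\sigma^2\sqrt n)$ so that $\E_{\theta_n} T_n$ hits the target $y_n V_n L_{n,2}/\sqrt n$. The hypothesis $\E\xi^6 < \infty$ is used to absorb the truncation error at the level of $V_n^2$ via a Marcinkiewicz--Zygmund / Rosenthal bound on $\sum_v \ell_n(v)\bigl(\xi^2(v)\1\{|\xi(v)| > M_n\} - \E[\xi^2 \1\{|\xi| > M_n\}]\bigr)$, and to ensure the second cumulant of the tilted law is accurate enough that the Jensen correction is of lower order than $y_n^2$. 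A Chebyshev estimate in the tilted measure, combined with second-moment concentration of $T_n$ around its tilted mean, yields
\begin{equation*}
\prob\bigl(T_n \geq y_n V_n L_{n,2}/\sqrt n \bigm| \{S_k\}\bigr) \geq \exp\!\bigl(-\tfrac{y_n^2}{2}(1+o(1))\bigr)
\end{equation*}
on $R_n$; averaging over the walk gives \eqref{lowerbound}.

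For the upper bound I would apply the self-normalized exponential inequality of de la Pe\~na--Lai--Shao. Conditional on the walk, the auxiliary expectation $\E_\prob \exp(\lambda T_n - \lambda^2 V_n^2/2)$ factorizes over $v$, and a Taylor expansion in $\lambda$ of each per-vertex mgf $\E_\prob \exp(\lambda \ell_n(v)\xi - \lambda^2 \ell_n(v)\xi^2/2)$, using $\E\xi = 0$, $\E\xi^2 = \sigma^2$, truncation of $\xi$ at $y_n\sqrt n$, and the cubic remainder controlled by $\E\xi^4 < \infty$ via Cauchy--Schwarz, yields a per-vertex upper bound essentially of the form $\exp(\tfrac{\lambda^2\sigma^2}{2}\ell_n(v)(\ell_n(v)-1)(1+o(1)))$. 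Combining this product bound with a dyadic discretization in $V_n$ inside the de la Pe\~na--Lai--Shao inequality and choosing $\lambda \asymp y_n/\sqrt n$ produces a conditional upper bound $\exp(-c_d y_n^2/2 \cdot (1+o(1)))$ on $R_n$, where $c_d < 1$ reflects the gap between $L_{n,2}^2/n \to \gamma_d$ and the i.i.d.\ reference value $1$; as $d \to \infty$ the walk escapes faster, $\gamma_d \to 1$, and $c_d \uparrow 1$. The main obstacle is tightening this per-vertex exponential-moment bound without Cram\'er's condition: balancing the truncation level of $\xi$ against $\lambda$ and against the Lindeberg ratio $\max_v \ell_n(v)/L_{n,2}$ is what forces $y_n = o(n^{1/6})$ and leaves the gap between the upper constant $c_d$ and the optimal $1$, which under finite moment assumptions alone does not seem to close.
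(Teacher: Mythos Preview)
Your upper-bound plan has a genuine gap: the mechanism you propose for why $c_d<1$ is wrong, and this reflects a missing structural step. Because $L_{n,2}$ already sits in the denominator of the statistic, the limit $L_{n,2}^2/n\to\gamma_d$ cancels out and cannot account for any loss of constant; indeed, on the ``good'' truncated part the paper obtains exactly $-1/2$ (Lemma~\ref{good-sum}). The loss to $c_d$ comes instead from a three-way decomposition that your sketch omits: one must split off the vertices with large local time ($\ell_n(v)\ge C\log n$) and those with large scenery ($\xi(v)\ge \sqrt{n}/(y_n\log^2 n)$), bound each of these via the Cauchy--Schwarz trick $T_{n,j}/V_{n,j}\le (\sum_{v}\ell_n(v))^{1/2}$, and then control $\mathscr L_n$ and $|\mathcal E^c|$ respectively. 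The rates for these bad pieces are proportional to the regeneration-tail exponent $\lambda_d$, and optimising the $\epsilon$-split between good and bad pieces gives $c_d=(1-(24/\lambda_d)^{1/2})^2$, which tends to $1$ because $\lambda_d\to\infty$ (Lemma~\ref{reg-moment}), not because $\gamma_d\to 1$. Your de~la~Pe\~na--Lai--Shao route cannot avoid this: with $\lambda\asymp y_n/\sqrt n$, truncating $\xi$ at $y_n\sqrt n$ as you propose leaves $\lambda\ell_n(v)\xi(v)$ of order $y_n^2\log n$, so the cubic-remainder Taylor step fails; truncating low enough for the expansion to be valid (namely at $\sqrt n/(y_n\log^2 n)$) makes the large-scenery set $\mathcal E^c$ non-negligible and forces exactly the decomposition above.

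Your lower-bound plan is workable but different from the paper's. The paper bypasses tilting entirely: it uses the elementary bound $y_nL_{n,2}V_n/\sqrt n\le \tfrac{1}{2b}(b^2V_n^2+y_n^2L_{n,2}^2/n)$ with $b=y_nL_{n,2}/n$, rewrites the event as a lower tail for the linear functional $\sum_v\ell_n(v)\eta(v)$ with $\eta=2b\xi-b^2\xi^2$, and then applies Nagaev's lower bound \cite{nagaev2002lower} directly, using $\E\xi^6<\infty$ only to control $\E|\eta|^3$. Your Shao-style truncation-plus-tilt should also reach $-1/2$, but the Nagaev route is shorter and makes the role of the sixth moment transparent.
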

In light of \eqref{Zdlowerbound}, the following theorem provides a sharp upper bound for the self-normalized RWRS on the lattice when only assuming finite moment conditions on the scenery.
\begin{theorem}\label{Zdupperbound}
Let $G = \Z^d$ for $d \geq 3$ and let $y_n$ be a positive sequence such that $(\log n)^{1/2} \ll y_n \ll n^{1/6}$. If $\E \xi^4 < \infty$, then
\begin{align}
\limsup_{n \to \infty} y_n^{-2d/(d+2)} (\log n)^{-2/(d+2)} \log \prob \otimes P\left(\frac{T_n \sqrt{n}}{V_n L_{n,2}} \geq y_n\right) <0.
\end{align} 
\end{theorem}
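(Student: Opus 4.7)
The plan is to promote the lower-bound heuristic behind \eqref{Zdlowerbound} into a matching upper bound: the self-normalized process can exceed $y_n$ only through a conspiracy between atypically large scenery on a small ball and atypical confinement of the walk there, and these two events together carry exactly the claimed cost. I would carry this out by truncating the scenery, peeling off the bounded part via a conditional Bernstein-type inequality, and then controlling the heavy-tailed part by a joint optimization over the walk's geometry and the scenery.

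First, restrict to a walk event $\mathcal{G}_n$ on which $L_{n,2}^2 \asymp n$, $\mathrm{range}(S) \subset B(o,n)$, and $\max_v \ell_n(v) \leq L_n$, with $L_n \asymp y_n^{2d/(d+2)}(\log n)^{2/(d+2)}$. For the transient walk on $\Z^d$, standard estimates (the geometric tail of local times at a fixed vertex, a union bound over $B(o,n)$, and moderate deviations for $L_{n,2}^2/n$ around $2G(0)-1$) yield $P(\mathcal{G}_n^c) \leq \exp(-cL_n)$, already at the target scale for $y_n \gg (\log n)^{1/2}$. Fix a truncation level $M=M_n$ and decompose $\xi(v) = \xi^T(v) + \xi^L(v)$ with $\xi^T(v) = \xi(v)\1\{|\xi(v)| \leq M\} - \E\xi\1\{|\xi|\leq M\}$. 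Writing $T_n = T_n^T + T_n^L$, the triangle inequality together with the Cauchy-Schwarz estimate $|T_n^L| \leq V_n^L\sqrt{L^{\mathrm{bad}}}$ applied to bad sites, where $L^{\mathrm{bad}} := \sum_{v:|\xi(v)|>M}\ell_n(v)$, shows that on $\mathcal{G}_n$ the event $\{T_n\sqrt n/(V_nL_{n,2})\geq y_n\}$ forces either (i) $|T_n^T|/V_n^T \geq cy_n$, or (ii) $L^{\mathrm{bad}} \geq cy_n^2$.

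Case (i) is handled by conditioning on the walk: $T_n^T$ is a centered sum of independent random variables bounded in absolute value by $2M L_n$ with conditional variance at most $\sigma^2 L_{n,2}^2 \leq Cn$, and Bernstein's inequality combined with a lower-tail estimate for $V_n^T$ yields $P(|T_n^T|/V_n^T \geq cy_n) \leq \exp(-c'\min(y_n^2, y_n\sqrt n/(ML_n)))$. Case (ii) requires a joint analysis. Decompose $\{L^{\mathrm{bad}} \geq cy_n^2\}$ by the dyadic radius $R$ and center $x$ of a ball $B(x,R)$ in which the walk deposits most of its bad-site local time, and by the number $k$ of bad sites inside. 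The principal-eigenvalue bound for the walk killed outside $B(x,R)$ gives
\[
P\!\left(\sum_{v\in B(x,R)}\ell_n(v)\geq cy_n^2\right)\lesssim \exp(-cy_n^2/R^2),
\]
while having $k$ bad sites in $B(x,R)$ costs at most $\binom{(2R+1)^d}{k}(CM^{-4})^k$ under $\prob$. Summing over $x \in B(o,n)$ (an $n^d$ factor), dyadic $R$, and $k \in [1, R^d]$, then jointly optimizing at $R^{d+2} \asymp y_n^2/\log n$ and $k \asymp R^d$, recovers the claimed rate $\exp(-cy_n^{2d/(d+2)}(\log n)^{2/(d+2)})$. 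The choice $M \asymp \sqrt n\, y_n^{(2-3d)/(d+2)}/(\log n)^{4/(d+2)}$ is dictated by requiring this rate to be simultaneously attained in case (i), while preserving $\log M \asymp \log n$ throughout $y_n \ll n^{1/6}$.

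The main obstacle is the optimization in case (ii): the combinatorial factors $\binom{R^d}{k}$ and $n^d$, together with the polynomial loss from the dyadic decomposition, must be absorbed into the stretched-exponential exponent without spoiling its sharpness. A second subtlety is verifying that the decomposition captures every realization of $L^{\mathrm{bad}}\geq cy_n^2$: when no single dyadic ball carries most of the bad-site local time, a pigeonhole argument across scales must still isolate a dominant scale $R$. Ultimately the balance between the scenery cost $4R^d\log M$ and the walk confinement cost $y_n^2/R^2$ at the optimum is the technical heart of the argument, and the proof amounts to checking that this balance is achievable uniformly in $(\log n)^{1/2}\ll y_n\ll n^{1/6}$.
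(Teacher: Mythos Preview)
Your Case (i) is essentially the paper's treatment of the truncated piece $T_{n,1}$, so that part is fine. The gap is in Case~(ii). The ball decomposition does not deliver the claimed rate at the large-$R$ end of the sum. When $B(x,R)$ is taken large enough to contain the whole range, the confinement cost $\exp(-cy_n^2/R^2)$ is trivial and everything must come from the scenery; but on $\mathcal G_n$ the only local-time information you retain is $\max_v\ell_n(v)\le L_n$, which forces merely $k\ge k_{\min}:=cy_n^2/L_n=cy_n^{4/(d+2)}(\log n)^{-2/(d+2)}$ bad sites in the range. The resulting scenery bound $(Cen\,M^{-4}/k_{\min})^{k_{\min}}$ gives only rate $k_{\min}\log n\asymp y_n^{4/(d+2)}(\log n)^{d/(d+2)}$, which is strictly smaller than the target $y_n^{2d/(d+2)}(\log n)^{2/(d+2)}$ throughout the regime $y_n^2\gg\log n$. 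The pigeonhole-across-scales remark does not close this: any ball-based slicing leaves a ``diffuse'' configuration in which the walk is unconfined and the bad-site local time is spread over many sites each visited $O(L_n)$ times, and for that configuration one recovers only the weak rate above. Optimizing your sum over $(x,R,k)$ locates the minimum at $R^{d+2}\asymp y_n^2/\log n$, but for an upper bound you need control at \emph{every} $(R,k)$, and at large $R$ it fails.

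The paper avoids this by introducing a second threshold, on local times rather than just on the scenery: it sets $t_*=y_n^{4/(d+2)}(\log n)^{d/(d+2)}$ (note $t_*\ll L_n$) and splits sites according to whether $\ell_n(z)>t_*$. The piece $\{\ell_n>t_*\}$ is handled by Asselah's level-set estimate $P(|\{z:\ell_n(z)>t\}|>u)\le C\exp(-c\,t\,u^{1-2/d})$ for $t\gg\log n$, which after a dyadic decomposition over local-time levels gives $P\big(\sum_{z:\ell_n(z)>t_*}\ell_n(z)\ge cy_n^2\big)\le\exp(-c'y_n^{2d/(d+2)}(\log n)^{2/(d+2)})$ directly---no geometric optimization is needed. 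On the remaining sites $\{\ell_n\le t_*\}$, Cauchy--Schwarz now reads $\sqrt{t_*\,|\mathcal E^c|}$ with the smaller cap $t_*$, so one needs $|\mathcal E^c|\ge cy_n^2/t_*=cy_n^{2d/(d+2)}(\log n)^{-d/(d+2)}$, and the elementary binomial scenery bound then does reach the target rate. Asselah's inequality is morally your confinement heuristic (a set of $u$ sites each visited $t$ times costs what a ball of radius $u^{1/d}$ would), but it is the correct quantitative form for an upper bound, and it is precisely the ingredient your proposal is missing.
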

\begin{remark}
For Theorems \ref{theorem} and \ref{Zdupperbound} we assume that $y_n \ll n^{1/6}$. In this regime, the deviation comes from the moderate deviation of the scenery. When $y_n \gg n^{1/6}$, the deviation comes from large deviations for the local time statistics $L_{n,2}$ and $L_{n,3}$, see Section \ref{timesection} for definitions.
\end{remark}

The rest of the paper is organized as follows. In Section \ref{timesection}, we study concentration inequalities for different local time statistics that will aid us in the proof of Theorem 
\ref{theorem}. Our main tool will be the regeneration structure of the random walk on the tree. In Sections \ref{upper-section} and \ref{lower-section}, we prove the upper and lower bound of Theorem \ref{theorem}. In Section \ref{Zdlocaltimesection} we review the necessary concentration inequalities for local time in the lattice, and in Section \ref{Zdupperboundproof} we prove Theorem \ref{Zdupperbound}.

\section{Local time for $\mathbb T_d$} \label{timesection}
For the rest of the paper will write $P_v$ and $E_v$ when the random walk is conditioned on starting at $v\in V$. We will also denote $a_n \asymp b_n$ if $0<\liminf_{n\to \infty} a_n/b_n \leq \limsup_{n\to\infty} a_n/b_n
< \infty$ for positive sequences $a_n$ and $b_n$. 
\subsection{Regeneration times}
Our proofs for the concentration of local times will utilize the regenerative structure of the random walk on the tree. For $v \in V$, denote the level of $v$ by $\vert v \vert$, which is the length of the unique geodesic between $v$ and $o$. We define regeneration times as
\begin{align*}
\tau_1 = \inf \{n \in \N : \vert S_n \vert  \neq  \vert S_k \vert \: \text{for all} \: k < n \: \text{and} \: \vert S_k \vert \neq  \vert S_{n-1} \vert \: \text{for all} \: k >n\}
\end{align*}
and for $j \in \N$
\begin{align*}
\tau_{j+1} = \inf \{n > \tau_j : \vert S_n \vert  \neq  \vert S_k \vert \: \text{for all} \: k < n \: \text{and} \: \vert S_k \vert \neq  \vert S_{n-1} \vert \: \text{for all} \: k >n \}.
\end{align*}
Regeneration times were studied in \cite{Dembo2002,Lyons1996} for biased random walks on Galton-Watson trees. Rerunning the same proofs, we can conclude that the simple random walk on $\mathbb T_d$ has infinitely many regeneration times such that $ \{\tau_{j+1} - \tau_j\}_{j \geq 1}$ is an i.i.d. sequence which is independent of $\tau_1$, and that $\tau_1$ and $\tau_2 - \tau_1$ have exponential moments. The following lemma shows that as $d$ tends to infinity, the tails of the regeneration times become lighter. 
\begin{lemma} \label{reg-moment}
	Define 
\begin{align*}
s_d = \sup \{ \lambda \geq 0: E e^{\lambda \tau_1} < \infty \: \:  \text{and} \: \: E e^{\lambda (\tau_2 - \tau_1)} < \infty \}.
\end{align*}	
We then have $s_d \geq \frac{1}{3} \log((d+1)/3)+\frac{1}{3}-\frac{1}{d+1}$.
\end{lemma}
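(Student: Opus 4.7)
The plan is to work with the level process $L_n = |S_n|$, a biased random walk on $\N_0$ with up-probability $p = d/(d+1)$ and down-probability $q = 1/(d+1)$ at positive states, and deterministic up-movement at $0$. Since the regeneration times depend only on $L$, the problem reduces to the analysis of this Markov chain. First I would decompose $\tau_1 = \sigma_G$, where $\sigma_j := \inf\{n : L_n = j\}$ is the first hitting time of level $j$ and $G := \inf\{j \geq 1 : L_n \neq j - 1 \text{ for all } n > \sigma_j\}$ is the first level at which the walk never returns below. By the strong Markov property at $\sigma_j$, the conditional probability that the walk after $\sigma_j$ never revisits level $j - 1$ equals $(d-1)/d$, the classical escape probability of a biased walk on $\Z$ starting at $1$ away from $0$.

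Next, to control the MGF of the first-passage time $\sigma_j$, I would use stochastic comparison: reflection at $0$ only accelerates first passage, so $\sigma_j$ is dominated by a sum of $j$ i.i.d.\ copies of the first-passage time in the unrestricted biased walk on $\Z$, whose MGF is
\[
\phi(\lambda) = \frac{e^{-\lambda} - \sqrt{e^{-2\lambda} - 4pq}}{2q},
\]
finite for $\lambda \leq \log((d+1)/(2\sqrt d))$, giving $E e^{\lambda \sigma_j} \leq \phi(\lambda)^j$. Combined with a two-term Chernoff decomposition
\[
P(\tau_1 > n) \leq P(\sigma_k > n) + P(G > k) \leq e^{-\lambda n}\phi(\lambda)^k + P(G > k),
\]
and an exponential tail bound on $G$ obtained by iterating the strong-Markov escape probability, one balances $k$ and $\lambda$ to conclude $E e^{\lambda \tau_1} < \infty$ for $\lambda$ up to the claimed rate. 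For the increment $\tau_2 - \tau_1$, the strong Markov property at $\tau_1$ gives that it has the distribution of the first regeneration time of the walk conditioned never to visit the parent of $S_{\tau_1}$; by Doob's $h$-transform this conditioned walk has an even stronger upward drift than the original (directly verifiable from the modified transition probabilities $\tilde p_k/\tilde q_k \geq p/q$), so the same MGF bound and the same lower bound on $s_d$ applies.

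The main obstacle is the positive correlation between the escape events at distinct levels: each is marginally Bernoulli$((d-1)/d)$ and, by strong Markov, conditionally independent of $\mathcal{F}_{\sigma_j}$ at its corresponding level, but the events at different levels are positively correlated (a walk that is slow to escape at one level is more likely to also be slow at subsequent ones, since both reflect an atypically large amount of backtracking in the level process). This rules out a naive product bound on $P(G > k)$; careful iteration of the strong Markov property, together with sharp optimization over the Chernoff parameters $k$ and $\lambda$, is required to extract the explicit constant $\frac{1}{3}\log((d+1)/3) + \frac{1}{3} - \frac{1}{d+1}$ in the stated lower bound.
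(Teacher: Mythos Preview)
Your decomposition $\tau_1 = \sigma_G$ is a reasonable starting point, but the proposal has a genuine gap: you correctly identify that the escape events at successive levels are positively correlated (a single long backtrack can make several consecutive levels fail to regenerate), yet you offer no concrete mechanism for bounding $P(G>k)$ beyond ``careful iteration of the strong Markov property.'' Without this, you have neither an exponential tail for $G$ nor any route to the precise constant $\tfrac{1}{3}\log((d+1)/3)+\tfrac{1}{3}-\tfrac{1}{d+1}$; even granting a bound $P(G>k)\le \rho^k$, the two-parameter optimization over $(k,\lambda)$ you describe would produce \emph{some} rate depending on $\rho$ and the first-passage MGF $\phi$, with no reason to expect it to coincide with the stated one.

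The paper bypasses all of this with a much shorter combinatorial argument on the level process. It shows the inclusion $\{\tau_1=k\}\subset\{\text{at least }\lfloor k/3\rfloor\text{ backward steps by time }k\}$: since no $\sigma_m$ with $m<|S_{\tau_1}|$ is a regeneration, the walk must revisit each level $m-1$ after $\sigma_m$, which forces one down-step per level below $|S_{\tau_1}|$ and hence $|S_{\tau_1}|\lesssim \tau_1/3$. Because each step from a non-root vertex is backward with probability $1/(d+1)$, the number of backward steps in $k$ steps is stochastically dominated by a $\mathrm{Bin}(k,\tfrac{1}{d+1})$ variable, and a single Chernoff bound at level $k/3$ yields exactly the claimed rate. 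For $\tau_2-\tau_1$ the paper simply invokes the comparison $P(\tau_2-\tau_1=k)\le C\,P(\tau_1=k)$ from Dembo--Gantert--Peres--Zeitouni rather than an $h$-transform. Note, incidentally, that the most natural way to rescue your bound on $P(G>k)$ is to observe that $\{G>k\}$ forces at least $k$ distinct down-steps (one from each level $1,\dots,k$), which is already the paper's counting idea; your decomposition thus detours back to the same argument without the clean constant.
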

\begin{proof}
Since we are only concerned with the levels of $\mathbb T_d$, we can consider our random walk as a Markov chain on $\N_0$ starting at $0$, with transition probabilities $p_{0,1} = 1$, $p_{j,j+1} =d/(d+1)$ and $p_{j,j-1} = 1/(d+1)$ for $j \in \N$. We claim that
	\begin{align*}
	\{ \tau_1 = k\} \subset \{ \text{RW took at least $\lfloor k/3 \rfloor$ steps backwards by time $k$}\}.
	\end{align*}
Observe that $\vert S_{\tau_1} \vert \leq \lfloor \tau_1/3 \rfloor$. This is because for every $m \in \N_0$ such that $m<\vert S_{\tau_1} \vert$, the random walk must visit $m$ at least twice. Now suppose by contradiction the random walk took less than $\lfloor k/3 \rfloor$ steps backwards, which means the random walk took more than $k-\lfloor k/3 \rfloor$ steps forward. This implies $\vert S_{\tau_1} \vert> \lfloor k/3 \rfloor$, which is a contradiction. Applying the Chernoff inequality to the binomial random variable $Z$ with parameters $k \in \N_0$ and $1/(d+1)$, it follows that for all $x \geq k/(d+1)$ 
\begin{align*}
P(Z> x) \leq \exp\left(-x \log\left(\frac{x}{k/(d+1)}\right)+x-\frac{k}{d+1}\right).
\end{align*}
By our inclusion and the above inequality, we have
\begin{align*}
P(\tau_1 = k) \leq P(Z \geq k/3) \leq \exp \left(-\frac{k}{3} \log((d+1)/3)+\frac{k}{3}-\frac{k}{d+1} \right).
\end{align*}
Hence when $\lambda < \frac{1}{3} \log((d+1)/3)+\frac{1}{3}-\frac{1}{d+1}$, which is strictly positive when $d>2$, we have $E e^{\lambda \tau_1} < \infty$. We are left to bound $P(\tau_2 - \tau_1 = k)$. Rerunning the proof of Lemma 4.3 in \cite{Dembo2002}, we conclude that there exists a constant $C>0$ such that for every $k$ we have $P(\tau_2-\tau_1=k) \leq C \cdot P(\tau_1=k)$. This finishes the proof.
\end{proof}
\subsection{Concentration inequalities}
We introduce the various local time statistics of the simple random walk on $\mathbb T_d$ used throughout the remainder of this paper. The one we are most interested in is the size of the level sets of the local time
\begin{align*}
    L_n(t) = \vert \{v \in V : \ell_n(v) > t  \}\vert.
\end{align*}
Let $\mathcal R_n = \{S_0,\ldots, S_{n-1} \}$ be the range of the random walk at time $n$. Setting $\lambda_d = s_d /2$, we define
\begin{align*}
\mathcal L^{(n)} = \mathcal L =\left \{v \in \mathcal R_n : \ell_n(v) < \frac{4}{\lambda_d}\log n \right\}
\end{align*}
to be the set of vertices with small local time. Another statistic that appears throughout the proof is 
\begin{align*}
\mathscr L_n = \sum_{v \in \mathcal L^c} \ell_n(v).
\end{align*}
For $q \in \N$, denote the $q$-fold self-intersection local time by
\begin{align*}
    L_{n,q} = \left(\sum_{v \in V}\ell_n^q(v) \right)^{1/q}.
\end{align*}
The $q$-fold self-intersection local times often appear in the study of RWRS because they quantify the number of times the random walk visits the same sites, see \cite{Asselah} for a discussion for the case $\Z^d$. Throughout the paper, we will frequently use the fact that 
\begin{align} \label{selfintersect}
L_{n,q}^q \geq n.
\end{align}
 Lastly, denote the maximum of the local times by 
\begin{align*}
    L_{n,\infty} = \max_{v \in \mathcal R_n} \ell_n(v).
\end{align*}
We will denote $\theta_k = \tau_{k}-\tau_{k-1}$ for $k \geq 1$ and $\theta_1 = \tau_1$ to be the regeneration epochs. Our main ingredient for deriving concentration inequalities for the local time will be the existence of regeneration epochs, and that they have exponential moments.

\begin{lemma}
Suppose $u \geq 1$ and $t \geq 1$. There exists a constant $M>0$ independent of $u$ and $t$ such that for $\beta \in (0,\lambda_d/2]$ we have
\begin{align}\label{localmoment-lemma}
    E\exp\left(\beta\sum_{k=1}^n  \theta_k \cdot \1\{\theta_k >t\} \right)\leq\exp(Mn\exp(-\beta t/2)),
\end{align}
which implies
\begin{align} \label{levelset-lemma} 
    P(L_n(t) \geq u) \leq \exp(Mn\exp(-\beta t/2)- \beta tu ).
\end{align}
\end{lemma}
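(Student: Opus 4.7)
The plan is to dispatch (\ref{localmoment-lemma}) first and then deduce (\ref{levelset-lemma}) by combining it with a geometric reduction of $L_n(t)$ to a sum over regeneration intervals.

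For (\ref{localmoment-lemma}), I would use that $\theta_1,\theta_2,\ldots$ are mutually independent (with $\theta_1$ possibly of a different law than the i.i.d.\ tail $\theta_2,\theta_3,\ldots$, but all satisfying $Ee^{\lambda\theta_k}<\infty$ for $\lambda<s_d$). Factoring over $k$ and splitting on $\{\theta_k\le t\}$ gives, for each $k$,
\begin{align*}
E\exp\bigl(\beta\theta_k\1\{\theta_k>t\}\bigr)\le 1+E\bigl[e^{\beta\theta_k}\1\{\theta_k>t\}\bigr].
\end{align*}
The pointwise inequality $\1\{\theta_k>t\}\le e^{\beta(\theta_k-t)/2}$ combined with $1+x\le e^x$ then yields
\begin{align*}
E\exp\bigl(\beta\theta_k\1\{\theta_k>t\}\bigr)\le \exp\!\left(e^{-\beta t/2}\,Ee^{3\beta\theta_k/2}\right).
\end{align*}
Because $3\beta/2\le 3\lambda_d/4=3s_d/8<s_d$, the moment $Ee^{3\beta\theta_k/2}$ is bounded uniformly in $k$ and in $\beta\in(0,\lambda_d/2]$ by some constant $M$; taking the product over $k=1,\dots,n$ delivers (\ref{localmoment-lemma}).

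To deduce (\ref{levelset-lemma}), I would set $\tau_0=0$ and let $A_k$ denote the set of vertices visited during $[\tau_{k-1},\tau_k\wedge(n+1))$. The second regeneration condition---namely $|S_j|\neq|S_{\tau_k-1}|$ for all $j>\tau_k$---confines the walk on $[\tau_k,\infty)$ to levels $\ge|S_{\tau_k}|$, and hence to the subtree rooted at $S_{\tau_k}$. Combined with the fact that the fresh maxima $|S_{\tau_j}|$ are strictly increasing in $j$, this forces the sets $A_k$ to be pairwise disjoint, so each $v\in\mathcal R_{n+1}$ lies in a unique $A_k$ and all of its visits fall within interval $k$. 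In particular $\sum_{v\in A_k}\ell_n(v)\le\theta_k$; moreover, if some $v\in A_k$ has $\ell_n(v)>t$ then necessarily $\theta_k>t$, and the count of such $v$ in $A_k$ is at most $\theta_k/t$. Summing over $k$,
\begin{align*}
t\cdot L_n(t)\le\sum_{k\ge 1}\theta_k\1\{\theta_k>t\},
\end{align*}
where only the indices $k\le n+1$ contribute since $\theta_k\ge 1$. Exponential Chebyshev with parameter $\beta$, together with (\ref{localmoment-lemma}) (the harmless shift from $n$ to $n+1$ absorbed into $M$), then gives (\ref{levelset-lemma}).

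The main obstacle is the geometric disjointness of the $A_k$, which has to be teased out of the two-sided regeneration condition; once the walk is pinned inside the subtree rooted at $S_{\tau_k}$ on $[\tau_k,\infty)$, the remainder is a routine exponential Markov estimate.
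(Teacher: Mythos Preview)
Your proposal is correct and follows essentially the same route as the paper: both bound $L_n(t)$ by $\frac{1}{t}\sum_k\theta_k\1\{\theta_k>t\}$ via the disjointness of the regeneration intervals, then apply exponential Chebyshev and control $E[e^{\beta\theta_k}\1\{\theta_k>t\}]$. The only cosmetic difference is that you bound this last quantity through the pointwise inequality $\1\{\theta_k>t\}\le e^{\beta(\theta_k-t)/2}$ (needing $3\beta/2<s_d$), while the paper uses Cauchy--Schwarz together with the Chebyshev tail bound $P(\theta_k>t)\le Me^{-\beta t}$ (needing $2\beta<s_d$); both extract the same factor $e^{-\beta t/2}$ and are valid on $\beta\in(0,\lambda_d/2]$.
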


\begin{proof}
We begin by making a few observations. Firstly, at the $k$th regeneration epoch $\theta_k$, there are at most $\lfloor \theta_k/t \rfloor$ vertices $v$ satisfying $\ell_n(v)> t$.  Secondly, any visited site is visited during a single regeneration epoch. Since by time $n$ there are at most $n$ regeneration times, we get 
\begin{align*}
    L_n(t) \leq \sum_{k=1}^n \left\lfloor \theta_k /t \right\rfloor \cdot \1\{\theta_k >t\}.
\end{align*}
Combining this inequality with the Chebyshev inequality, we have for any $\beta \in (0,\lambda_d]$ 
\begin{align*}
    P(L_n(t) \geq u) &\leq P\left(\sum_{k=1}^n \theta_k \cdot \1\{\theta_k >t\} \geq ut\right) \\
     &\leq \exp(-\beta ut) \prod_{k=1}^nE\exp \left(\beta  \theta_k\cdot \1\{\theta_k >t\} \right),
\end{align*}
where the last inequality uses the fact that the regeneration epochs are independent. We are left to bound the exponential moment. Again by the Chebyshev inequality, we have $P(\theta_k > t) \leq M \exp(-\beta t)$ for some positive constant $M$. Assuming $\beta \in (0,\lambda_d/2]$, we have
\begin{align*}
 \prod_{k=1}^nE\exp \left(\beta  \theta_k \cdot \1\{\theta_k >t\} \right) &\leq \prod_{k=1}^n\bigg(E[\exp \left(\beta  \theta_k \right)\1\{\theta_k>t\}]+ 1 \bigg) \\
 &\leq \prod_{k=1}^n\bigg((E\exp \left(2\beta  \theta_k \right))^{1/2} P(\theta_k > t)^{1/2}+ 1 \bigg) \\
 &\leq \exp(Mn\exp(-\beta t/2)).
\end{align*}
\end{proof}
\begin{lemma}\label{localsum-lemma}
There exists an $M > 0 $ such that for any $u \geq 1$
\begin{align*}
    P\left(\mathscr L_n \geq u\right) \leq M\exp(- \lambda_d u/2).
\end{align*}
\end{lemma}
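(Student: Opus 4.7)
The plan is to reduce $\mathscr L_n$ to the same type of sum $\sum_k \theta_k \mathbf 1\{\theta_k > t\}$ that the preceding lemma already controls, and then apply Markov's inequality. The key structural observation is that, by the very definition of regeneration times, each vertex $v \in \mathcal R_n$ is visited during exactly one regeneration epoch (once the walk leaves that epoch it never returns to a level less than or equal to $|S_{\tau_k-1}|$). Consequently, if $v \in \mathcal L^c$, i.e.\ $\ell_n(v) \geq (4/\lambda_d)\log n$, all of its visits lie inside a single epoch $k(v)$, and that epoch must satisfy $\theta_{k(v)} \geq \ell_n(v) \geq (4/\lambda_d)\log n$. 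Summing contributions vertex-by-vertex and regrouping by epoch gives
\begin{align*}
\mathscr L_n \;=\; \sum_{k=1}^{n}\, \sum_{\substack{v \in \mathcal L^c \\ v \text{ visited in epoch } k}} \ell_n(v) \;\leq\; \sum_{k=1}^n \theta_k \cdot \mathbf 1\!\left\{\theta_k > \tfrac{4}{\lambda_d}\log n\right\},
\end{align*}
because the sum of local times of all vertices visited in epoch $k$ equals $\theta_k$.

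Now I would plug $t = (4/\lambda_d)\log n$ and $\beta = \lambda_d/2$ into \eqref{localmoment-lemma}. With these choices $\beta t/2 = \log n$, so $n \exp(-\beta t/2) = 1$, and the right-hand side of \eqref{localmoment-lemma} is bounded by the absolute constant $e^M$. Taking exponential moments in the display above therefore yields
\begin{align*}
E \exp\!\left(\tfrac{\lambda_d}{2}\, \mathscr L_n\right) \;\leq\; E \exp\!\left(\tfrac{\lambda_d}{2}\sum_{k=1}^n \theta_k \cdot \mathbf 1\!\left\{\theta_k > \tfrac{4}{\lambda_d}\log n\right\}\right) \;\leq\; e^M.
\end{align*}
A direct Chebyshev/Markov bound then gives $P(\mathscr L_n \geq u) \leq e^M \exp(-\lambda_d u/2)$, which is the desired inequality (with a possibly enlarged constant $M$). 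For small $n$ where $t = (4/\lambda_d)\log n < 1$, the bound is trivial after enlarging $M$ once more, since $\mathscr L_n \leq n$ is bounded in that regime.

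The only genuinely nonroutine step is the combinatorial identity $\sum_{v \text{ visited in epoch } k} \ell_n(v) = \theta_k$, together with the single-epoch property of visits, which together produce the clean epoch-by-epoch domination. Everything after that is a one-line application of the preceding lemma; no delicate tuning of parameters is required beyond matching $t$ to the threshold $(4/\lambda_d)\log n$ built into the definition of $\mathcal L$.
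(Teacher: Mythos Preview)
Your proof is correct and follows exactly the paper's approach: the paper's proof consists of the same inequality $\mathscr L_n \leq \sum_{k=1}^n \theta_k \cdot \1\{\theta_k > \tfrac{4}{\lambda_d}\log n\}$ followed by Chebyshev and an application of \eqref{localmoment-lemma}. You have simply spelled out the justification of that inequality and the parameter choice $\beta=\lambda_d/2$, $t=(4/\lambda_d)\log n$ (so $n e^{-\beta t/2}=1$) that the paper leaves implicit.
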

\begin{proof}
We have the inequality $\mathscr L_n \leq \sum_{k=1}^n  \theta_k \cdot \1\{\theta_k >\frac{4}{\lambda_d} \log n\}$. The Chebyshev inequality combined with an application of \eqref{localmoment-lemma} finishes the proof. 
\end{proof}
\begin{lemma}\label{max-lemma}
Suppose $x> 1$. Then there exists a constant $c_1>0$ independent of $n$ and $x$ such that
\begin{align*}
    P(L_{n,\infty} \geq x) \leq n \exp(-c_1 (x-1)). 
\end{align*}
\end{lemma}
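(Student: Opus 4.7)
The plan is to reuse the key combinatorial observation already exploited in the proof of the preceding lemma: by the very definition of the regeneration times, any site visited by the walk before time $n$ lies in a single regeneration epoch. Concretely, if $v \in \mathcal{R}_n$ and $T_v$ denotes the first hitting time of $v$, then the unique $k(v)$ with $\tau_{k(v)-1} \leq T_v < \tau_{k(v)}$ is also the only epoch in which $v$ can be visited, since after $\tau_{k(v)}$ the walk can never return to any level $\leq |S_{\tau_{k(v)}-1}|$. Hence every contribution to $\ell_n(v)$ comes from the window $[\tau_{k(v)-1},\tau_{k(v)})$, and in particular
\begin{align*}
\ell_n(v) \leq \theta_{k(v)} \leq \max_{1 \leq j \leq N_n + 1} \theta_j,
\end{align*}
where $N_n = \sup\{k : \tau_k \leq n\} \leq n$. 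Taking the maximum over $v$ gives $L_{n,\infty} \leq \max_{1 \leq j \leq n+1} \theta_j$.

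Next, I would apply the union bound and control each $P(\theta_j \geq x)$ via Chebyshev using the exponential moment bound from Lemma~\ref{reg-moment}: since $E e^{\lambda_d \theta_j}$ is uniformly bounded by some $C_d < \infty$, one has $P(\theta_j \geq x) \leq C_d e^{-\lambda_d x}$ for every $j$. Combining with the bound above,
\begin{align*}
P(L_{n,\infty} \geq x) \leq (n+1)\, C_d\, e^{-\lambda_d x}.
\end{align*}
The stated inequality $n \exp(-c_1(x-1))$ then follows by choosing $c_1 \in (0,\lambda_d)$ small enough that the multiplicative factor $C_d$ and the $+1$ in $n+1$ can be absorbed into the exponential using the restriction $x > 1$.

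The argument has no real obstacle: the geometry of regeneration on $\mathbb{T}_d$ (a visited vertex sits in exactly one epoch) turns a question about the maximal local time into a question about the maximum of at most $n+1$ random variables with exponential tails, which Lemma~\ref{reg-moment} already supplies. The only bookkeeping is the routine absorption of the constant $C_d$ and the normalization $(x-1)$ versus $x$ in the exponent.
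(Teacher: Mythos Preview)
Your argument is correct and takes a genuinely different route from the paper. The paper does not use regeneration here at all: it bounds $P_v(\ell_n(v)\ge x)\le(1-p_v)^{x-1}$ via the strong Markov property and the escape probability $p_v=P_v(T_v^+=\infty)$, establishes the uniform lower bound $p_v\ge \tfrac{d}{d+1}p_o$, and then sums over $v$ through a first-hitting-time decomposition, which produces the factor $n$ and the explicit constant $c_1=-\log\bigl(1-\tfrac{d}{d+1}p_o\bigr)$ with no leftover prefactor to absorb. Your regeneration route is a clean reuse of machinery already set up in the preceding lemma. Two remarks are worth making. First, the final ``routine absorption'' is not quite automatic: since $\theta_j\ge 1$ a.s., one has $C_d=Ee^{\lambda_d\theta_j}\ge e^{\lambda_d}$, so the inequality $(n+1)C_d e^{-\lambda_d x}\le n\, e^{-c_1(x-1)}$ fails for $x$ near $1$ no matter how small $c_1$ is; you must also invoke the trivial bound $P\le 1$ in the range where $n\,e^{-c_1(x-1)}\ge 1$, which is harmless but should be said. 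Second, and more substantively, the paper's escape-probability proof is later reused for $\Z^d$ (see the proof of Lemma~\ref{Zdlocalsum}), because it only requires $\inf_v P_v(T_v^+=\infty)>0$; your regeneration argument is specific to the tree and would not transfer.
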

\begin{proof}
For $v \in V$, define $T^+_v = \inf \{n \geq 1 : S_n = v \}$ to be the return time of $v$ and let $p_v = P_v(T^+_v = + \infty)$ be the escape probability starting at $v$. Observe that the probability of escaping from $o$ is the same as the probability of escaping from $v$ conditioned on the event of $\{  \vert S_1 \vert > \vert v \vert\}$. Therefore,
\begin{align*}
    p_o = P_v(T^+_v = +\infty \vert \vert S_1 \vert > \vert v \vert ) =\frac{P_v(T_v^+= +\infty, \vert S_1 \vert > \vert v \vert )}{P_v(\vert S_1 \vert > \vert v \vert)}\leq \frac{P_v(T_v^+= +\infty)}{P_v(\vert S_1 \vert > \vert v \vert)}= \frac{d+1}{d}p_v.
\end{align*}
We now have the uniform lower bound $p_v \geq p_o d/(d+1).$ By the strong Markov property, we have 
\begin{align*}
    P_v(\ell_n(v) \geq x) \leq (1-p_v)^{x-1} \leq \left(1-\frac{d}{d+1}p_o\right)^{x-1}.
\end{align*}
Using the same proof as Lemma 18 in \cite{FLEISCHMANN2008}, we have
\begin{align*}
P(L_{n,\infty} \geq x) \leq \sum_{v \in V} P(\ell_n(v) \geq x) &= \sum_{v \in V}\sum_{k=1}^n P(T_v^+=k)P_v( \ell_{n-k}(v) \geq x) \\
&\leq \sum_{v \in V}P_v( \ell_{n}(v) \geq x)\sum_{k=1}^n P(T_v^+=k) \\
&\leq \left(1-\frac{d}{d+1}p_o\right)^{x-1}\sum_{v \in V} P(T_v^+\leq n).
\end{align*}
We finish by observing $\sum_{v \in V} P(T_v^+\leq n)  \leq \sum_{v \in V} \sum_{k=1}^n P(S_k = v)  = n$.
\end{proof}
\begin{lemma}\label{silt-lemma} 
Fix an integer $q \geq 2$. There exist positive constants $B_q$ and $c_q$ such that
\begin{align*}
    \limsup_{n \to \infty } n^{-1/q}\log P\bigg(L_{n,q}^q \geq B_qn\bigg) \leq - c_q. 
\end{align*}
\end{lemma}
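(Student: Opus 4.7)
The plan uses the regenerative structure of the walk and the exponential-moment control of $\theta_k$ from Lemma \ref{reg-moment}. The key structural observation is that by the definition of $\tau_j$, the level sets $\{\vert S_k \vert : \tau_{j-1} \leq k < \tau_j\}$ of distinct regeneration blocks are disjoint, forcing the vertex sets $A_j$ visited in them to be disjoint as well. Writing $\ell^{(j)}(v)$ for the local time accumulated during block $j$, so that $\ell_n(v) = \ell^{(j)}(v)$ for the unique $j$ with $v \in A_j$ and $\sum_{v \in A_j}\ell^{(j)}(v) \leq \theta_j$, the elementary inequality $\sum_i a_i^q \leq (\sum_i a_i)^q$ for nonnegative $a_i$ and $q \geq 1$ yields
\begin{align*}
L_{n,q}^q \leq \sum_{j=1}^{K_n+1}\theta_j^q \leq \sum_{j=1}^{n+1}\theta_j^q,
\end{align*}
where $K_n \leq n$ is the number of completed regeneration epochs by time $n$.

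The task reduces to bounding the tail of a sum of (essentially) i.i.d.\ random variables with stretched-exponential tail $P(\theta_j^q \geq y) \leq Ce^{-\beta y^{1/q}}$. I would decompose $\sum_j \theta_j^q = S_1 + S_2 + S_3$ via a three-level truncation: small ($\theta_j \leq C\log n$), intermediate ($C\log n < \theta_j \leq n^{1/q}$), and large ($\theta_j > n^{1/q}$), with $C$ a large constant. The large part is handled by a union bound: $P(S_3 > 0) \leq (n+1)P(\theta_j > n^{1/q}) \leq C' n e^{-\beta n^{1/q}}$. The small part is a sum of bounded ($\leq (C\log n)^q$) i.i.d.\ variables with finite mean $\mu_q := E[\theta_j^q]$, so for $B_1 > 2\mu_q$, Bernstein's inequality gives $P(S_1 \geq B_1 n) \leq \exp(-cn/(\log n)^q)$, negligible compared to $e^{-cn^{1/q}}$. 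For the intermediate part, the bound $\theta_j^q \leq n^{(q-1)/q}\theta_j$ on $\{\theta_j \leq n^{1/q}\}$ converts the problem into one on $\sum_j \theta_j\1\{\theta_j > C\log n\}$, which Markov's inequality applied to \eqref{localmoment-lemma} controls:
\begin{align*}
P\Bigl(\sum_j \theta_j\1\{\theta_j > C\log n\} \geq u\Bigr) \leq \exp(-\beta u + Mn^{1-\beta C/2}).
\end{align*}
Taking $u = B_2 n^{1/q}$ with $C$ large enough that $Mn^{1-\beta C/2} = o(n^{1/q})$ gives $P(S_2 \geq B_2 n) \leq \exp(-c_2 n^{1/q})$.

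Summing the three estimates yields $P(L_{n,q}^q \geq B_q n) \leq C'' n \exp(-c_q n^{1/q})$ for $B_q$ large enough, so that $\limsup_n n^{-1/q}\log P \leq -c_q$, as required. The main technical obstacle is the intermediate range: a direct Bernstein bound at the truncation level $n^{1/q}$ alone produces a per-term variance of order $n$ and hence only a constant exponent in $n$; the three-level split together with the trick $\theta_j^q \leq n^{(q-1)/q}\theta_j$ is what allows one to outsource the heavy tail to the exponential-moment bound \eqref{localmoment-lemma}, whose derivation earlier in the section rests on precisely the same idea.
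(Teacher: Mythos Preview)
Your argument is correct and takes a genuinely different route from the paper's. The paper works on the vertex side: it splits off the event $\{L_{n,\infty}>n^{1/q}\}$ via Lemma~\ref{max-lemma}, then dyadically decomposes the range according to local-time level sets $\mathcal D_k=\{v:2^{k-1}<\ell_n(v)\le 2^k\}$, controls $|\mathcal D_k|$ in two regimes (small and large $k$) using the level-set bound \eqref{levelset-lemma}, and checks that on the complement of all these bad events $L_{n,q}^q\le B_q n$. You instead work on the time side: the block-disjointness observation gives the clean pointwise bound $L_{n,q}^q\le\sum_{j\le n+1}\theta_j^q$, after which the problem is a pure i.i.d.\ tail estimate for stretched-exponential summands, handled by your three-level truncation and the key inequality $\theta_j^q\le n^{(q-1)/q}\theta_j$ on $\{\theta_j\le n^{1/q}\}$. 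Your approach is shorter and avoids invoking Lemma~\ref{max-lemma} separately; it also makes transparent that the exponent $n^{1/q}$ arises exactly as the stretched-exponential tail rate of $\theta_j^q$. The paper's level-set decomposition, on the other hand, is a bit more modular and reusable (the bounds on $|\mathcal D_k|$ are of independent interest), at the cost of a slightly longer argument. Both proofs ultimately rest on the exponential moments of the regeneration epochs, and in particular your use of \eqref{localmoment-lemma} for the intermediate range mirrors the paper's use of \eqref{levelset-lemma}, which was derived from the same estimate.
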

\begin{proof}
Fix $B_q>0$  and define the events $E = \{L_{n,q}^q \geq B_q n\}$ and $F = \{L_{n,\infty} > n^{1/q} \}$.  By Lemma \ref{max-lemma}, we are left to bound $P(E \cap F^c)$, which we will do by bounding the probability that the level sets of the local time are large. Define the sets 
\begin{align*}
    \mathcal D_k = \left \{ v \in V : 2^{k-1} < \ell_n(v) \leq 2^{k}\right\}
\end{align*}
for $k=0, \ldots,  \lceil \log_2(n^{1/q}) \rceil = K_2$, and define the events
\begin{align*}
    D_k &= \left\{ \vert \mathcal D_k \vert > \frac{8 M n}{\lambda_d}\frac{e^{-\lambda_d 2^{k-1}/8}}{2^{k-1}} \right\} \enspace &&\text{for} \enspace k=0, \ldots, K_1 =  \lfloor \log_2(\frac{8}{\lambda_d}\log n) \rfloor \\
   D_k &= \left\{ \vert \mathcal D_k \vert > \frac{n^{1/q}
}{2^{ k-1}}\right\} \enspace &&\text{for} \enspace k=K_1 + 1 ,\ldots, K_2.
\end{align*}
For $k=1,\ldots, K_1$, we apply \eqref{levelset-lemma} for $\beta = \lambda_d/4$ and get
\begin{align*}
    P(D_k) \leq P\left(L_n(2^{k-1}) \geq  \frac{8Mn}{\lambda_d}\frac{e^{-\lambda_d 2^{k-1}/8}}{2^{k-1}}\right) \leq \exp\left(- M n e^{-\lambda_d 2^{k-1}/8}\right) \leq \exp(-M n^{1/2}).
\end{align*}
For $v \in \mathcal D_k$ for $k=K_1 + 1,\ldots, K_2$ we have $\ell_n(v) \geq 4\log n/\lambda_d$, so we can apply \eqref{levelset-lemma} with parameter $\beta = \lambda_d/2$ to get
\begin{align*}
    P(D_k) \leq P\left(L_n(2^{ k - 1}) \geq \frac{n^{1/q}}{2^{k-1}}\right) \leq  \exp\left(M-\frac{\lambda_d}{2} n^{1/q}\right).
\end{align*}
By the union bound it follows that there exists $c_q>0$ independent of $n$ such that
\begin{align*}
    \limsup_{n \to \infty } n^{-1/q}\log P\left( \cup_{k=0}^{K_2} D_k \right)\leq - c_q. 
\end{align*}
It is left to show $E \cap F^c  \subset  \left (\cup_{k=0}^{K_2} D_k \right) \cap F^c$. On the event $F^c$, we have $V =  \cup_{k=0}^{K_2} \mathcal D_k $. Therefore, on the event $\left (\cup_{k=0}^{K_2} D_k \right)^c \cap F^c$, we get 
\begin{align*}
    L_{n,q}^q \leq \sum_{k=0}^{K_2} 2^{qk} \vert \mathcal D_k \vert \leq \frac{8Mn}{\lambda_d}\sum_{k=0}^{K_1} 2^{(q-1)k+1}e^{- \lambda_d 2^{k-1}/8} + n^{1/q} \sum_{k=K_1 + 1}^{K_2} 2^{(q-1)k+1} \leq B_q n
\end{align*}
for some constant $B_q$. 
\end{proof}
\section{Proof of the upper bound  \eqref{upperbound}} \label{upper-section}
Our strategy will be to decompose $T_n/V_n$ into summands according to the size of the local times and scenery values. The probability that $T_n/V_n$ is large will be rewritten as the probability that each summand is large. The probability that the summand over small local time and scenery value is large essentially reduces to the regime of bounded i.i.d. random variables, see Lemma \ref{good-sum}. The probability that the summands over large local time and large scenery value is large will be bounded by the events $\mathscr L_n$ and $\vert \mathcal E^c \vert$ are large, see definitions below.

Without loss of generality, assume $\E \xi^2 = 1$. 
Before we continue, we introduce notation for the rest of the proof. Define the sets
\begin{align*}
    \mathcal L^{(n)} =\mathcal L= \left\{v \in \mathcal R_n : \ell_n(v) < \frac{4}{\lambda_d}\log n \right\} \qquad \text{and} \qquad
      \mathcal E^{(n)}= \mathcal E = \left\{ v\in \mathcal R_n : \xi(v)< \frac{\sqrt{ n}}{y_n \log^2 n}  \right\},
\end{align*}
as well as the partial sums
\begin{align*}
    T_{n,1} &= \sum_{v\in \mathcal L \cap \mathcal E} \ell_n(v)\xi(v), & V^2_{n,1} &= \sum_{v\in \mathcal L \cap \mathcal E} \ell_n(v)\xi^2(v), \\
    T_{n,2} &= \sum_{v\in \mathcal L \cap \mathcal E^c} \ell_n(v)\xi(v), & V^2_{n,2} &= \sum_{v\in \mathcal L \cap \mathcal E^c} \ell_n(v)\xi^2(v)\\
    T_{n,3} &= \sum_{v\in \mathcal L^c } \ell_n(v)\xi(v), & V^2_{n,3} &= \sum_{v\in \mathcal L^c } \ell_n(v)\xi^2(v),
\end{align*}
so that $T_n = T_{n,1} + T_{n,2} + T_{n,3}$ and $V^2_n = V^2_{n,1} + V^2_{n,2}+V^2_{n,3}$. Before we prove the upper bound, we need a few auxiliary results.
 \begin{lemma} \label{good-sum}
Suppose $1\ll y_n \ll n^{1/6}$ and $\E \xi^4 < \infty$. We then have
\begin{align*}
   \limsup_{n\to \infty}y_n^{-2}\log \prob \otimes P\left(\frac{T_{n,1} \sqrt{n}}{V_{n,1} L_{n,2}} \geq y_n \right) \leq -\frac{1}{2}.
\end{align*}
 \end{lemma}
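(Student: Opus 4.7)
The idea is that on $\mathcal L \cap \mathcal E$ the products $\ell_n(v)\xi(v)$ are uniformly bounded by $(4/\lambda_d)\sqrt n /(y_n \log n)$, small enough that, conditional on the walk, a Cram\'er-style exponential Markov argument on the scenery produces the sharp Gaussian rate. The plan is to peel off the self-normalization and then apply exponential Markov with a Taylor expansion controlled by this uniform bound.

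Set $\eta_v = \xi(v)\1\{\xi(v) < c_n\}$ with $c_n = \sqrt n /(y_n\log^2 n)$, so that $T_{n,1} = \sum_{v \in \mathcal L} \ell_n(v) \eta_v$ and $V_{n,1}^2 = \sum_{v \in \mathcal L} \ell_n(v) \eta_v^2$. Since $\E \xi = 0$ and $\E \xi^4 < \infty$, the centering $\mu_n = \E\eta_v = -\E[\xi\1\{\xi \geq c_n\}]$ satisfies $|\mu_n| \leq \E\xi^4/c_n^3 = O(y_n^3\log^6 n/n^{3/2})$ and $\E\eta_v^2 = 1 - o(1)$. For fixed $\epsilon \in (0,1)$, the trivial inclusion
\[
\{T_{n,1}\sqrt n \geq y_n V_{n,1} L_{n,2}\} \subseteq \{T_{n,1} \geq y_n \sqrt{1-\epsilon}\,L_{n,2}\} \cup \{V_{n,1}^2 < (1-\epsilon)n\}
\]
separates the self-normalization. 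The second event is controlled by Bernstein conditional on the walk: the summands $\ell_n(v)\eta_v^2$ are uniformly bounded by $(4/\lambda_d)(\log n)c_n^2 = O(n/(y_n^2\log^3 n))$ with conditional variance at most $L_{n,2}^2 \E \xi^4$, and their conditional mean is $(1-o(1))(n - \mathscr L_n) \geq (1-\epsilon/2)n$ once Lemma~\ref{localsum-lemma} guarantees $\mathscr L_n \leq \epsilon n/4$. This yields a tail of order $\exp(-c_\epsilon y_n^2 \log^3 n)$, comfortably below $\exp(-y_n^2/2)$.

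For the first event, I would condition on the walk and apply exponential Markov with the walk-measurable parameter $\lambda = y_n\sqrt{1-\epsilon}/L_{n,2}$. The boundedness $\ell_n(v) \leq (4/\lambda_d)\log n$ on $\mathcal L$, together with $L_{n,2} \geq \sqrt n$ from \eqref{selfintersect}, ensures $|\lambda\ell_n(v)\eta_v| \leq C/\log n = o(1)$ uniformly. Therefore the pointwise estimate $\exp(x) \leq 1 + x + (1+o(1))x^2/2$ applied to each factor and combined with $\E\eta_v^2 \leq 1$ gives
\[
\E\!\left[\exp(\lambda T_{n,1}) \mid \text{walk}\right] \leq \exp\!\left(\lambda \mu_n n + (1+o(1))\tfrac{1}{2}\lambda^2 L_{n,2}^2\right).
\]
Combining this with $\lambda y_n\sqrt{1-\epsilon} L_{n,2} = y_n^2(1-\epsilon)$ and the bias bound $|\lambda\mu_n n| = O(y_n^4\log^6 n / n) = o(1)$ (valid since $y_n = o(n^{1/6})$), Markov's inequality yields $\prob(T_{n,1} \geq y_n\sqrt{1-\epsilon} L_{n,2} \mid \text{walk}) \leq \exp(-y_n^2(1-\epsilon)(1-o(1))/2)$. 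Integrating over the walk and sending $\epsilon \downarrow 0$ after $n \to \infty$ produces the claimed $-1/2$ rate.

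The main obstacle is the careful calibration of the truncation level $c_n = \sqrt n/(y_n\log^2 n)$, which must simultaneously make the Taylor remainder in the exponential Markov step of smaller order than $y_n^2$, render the one-sided centering bias $\lambda\mu_n n$ negligible, and ensure the Bernstein concentration of $V_{n,1}^2$ sharper than $\exp(-y_n^2/2)$ --- all under only the finite fourth-moment hypothesis $\E\xi^4 < \infty$ and with the constraint $y_n \ll n^{1/6}$ dictating how large the truncation can be pushed.
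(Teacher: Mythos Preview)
Your overall strategy---split on $\{V_{n,1}^2 < (1-\epsilon)n\}$ to remove the self-normalization, then apply exponential Markov to $T_{n,1}$ conditionally on the walk---is the same as the paper's. But there is a recurring slip: the truncation $\mathcal E = \{\xi(v) < c_n\}$ is \emph{one-sided}, so $\eta_v = \xi(v)\1\{\xi(v)<c_n\}$ satisfies $\eta_v < c_n$ but is unbounded below. Consequently neither of your boundedness claims holds as stated: $|\lambda\ell_n(v)\eta_v|\le C/\log n$ is false (only $\lambda\ell_n(v)\eta_v\le C/\log n$ is true), and the summands $\ell_n(v)\eta_v^2$ are \emph{not} bounded by $(4/\lambda_d)(\log n)c_n^2$, so Bernstein as you invoke it does not apply.

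Both points are easily repaired once the one-sidedness is acknowledged. For the Taylor step, the inequality $e^x \le 1 + x + x^2/2$ holds for all $x \le 0$, so the upper bound $x \le C/\log n$ alone already gives $e^x \le 1 + x + (1+o(1))x^2/2$ uniformly on the range of $\lambda\ell_n(v)\eta_v$; your argument then goes through and, in fact, avoids any cubic remainder. For the lower tail of $V_{n,1}^2$, one-sided Bernstein needs $\E X_v - X_v$ bounded above; since $\eta_v^2\ge 0$ this is at most $\ell_n(v)\E\eta_v^2 \le (4/\lambda_d)\log n$, and the conditional variance is controlled by $\E\xi^4\sum_{v\in\mathcal L}\ell_n^2(v) \le (4/\lambda_d)\E\xi^4\, n\log n$ deterministically (not by $L_{n,2}^2\E\xi^4$, which you have not bounded). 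The paper takes a different route: it restricts to the good event $\{L_{n,2}^2\le B_2 n,\ L_{n,3}^3\le B_3 n,\ \mathscr L_n < y_n^2\log n\}$, uses the one-sided cubic bound $e^x\le 1+x+x^2/2+C|x|^3$ valid for all $x\le 1$, and then controls the resulting $O(y_n^3 L_{n,3}^3/L_{n,2}^3)$ error via Lemma~\ref{silt-lemma}. Your corrected quadratic approach would have the advantage of dispensing with $L_{n,3}$ altogether.
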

 \begin{lemma}\label{scenery-lemma}
Suppose that $\E \vert \xi^{m}\vert< \infty$ for some fixed $m \in \N$. For $x >0$, we have
\begin{align*}
    \prob\otimes P(\vert \mathcal E^c \vert \geq x) \leq \left(e \E \vert \xi^m\vert \frac{y_n^m \log^{2m}(n)}{xn^{m/2-1}}\right)^x.
\end{align*}
\end{lemma}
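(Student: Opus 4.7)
My plan is to condition on the random walk and then apply a classical binomial tail bound. Because the scenery is independent of the walk and the values $\{\xi(v)\}_{v \in V}$ are i.i.d., conditional on the walk the indicators $\1\{v \in \mathcal{E}^c\}$ for $v \in \mathcal{R}_n$ are i.i.d.\ Bernoulli variables with common success probability
\begin{align*}
p \;:=\; \prob\!\left(\xi \geq \frac{\sqrt n}{y_n \log^2 n}\right).
\end{align*}
Hence $|\mathcal{E}^c|$, conditional on the walk, is $\mathrm{Bin}(|\mathcal{R}_n|,p)$, and since $|\mathcal{R}_n| \leq n$ deterministically, unconditionally it is stochastically dominated by $\mathrm{Bin}(n,p)$.

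Next, I would bound $p$ via Markov's inequality applied to $|\xi|^m$:
\begin{align*}
p \;\leq\; \prob\!\left(|\xi|^m \geq \Bigl(\tfrac{\sqrt n}{y_n \log^2 n}\Bigr)^m\right) \;\leq\; \E|\xi|^m \left(\frac{y_n \log^2 n}{\sqrt n}\right)^m.
\end{align*}
For the binomial upper tail I would invoke the elementary union-bound estimate: for $Z \sim \mathrm{Bin}(n,p)$ and an integer $k \geq 1$,
\begin{align*}
\prob(Z \geq k) \;\leq\; \binom{n}{k} p^k \;\leq\; \left(\frac{enp}{k}\right)^k,
\end{align*}
where the second inequality uses $\binom{n}{k} \leq (en/k)^k$. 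Substituting the Markov estimate for $p$ and taking $k = \lceil x \rceil$ yields the stated bound. The passage from $\lceil x \rceil$ to $x$ in the exponent is harmless: if the base $e\E|\xi|^m y_n^m \log^{2m}(n)/(xn^{m/2-1})$ exceeds $1$ the assertion is trivial, and otherwise the base is at most $1$, so replacing $\lceil x \rceil$ by the smaller exponent $x$ only weakens the bound.

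There is no real obstacle here; the argument is a routine combination of independence between scenery and walk, Markov's inequality for the truncation probability, and a Chernoff-type tail bound for the binomial. The only minor bookkeeping step is the monotonicity observation handling non-integer $x$.
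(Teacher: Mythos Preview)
Your proposal is correct and follows essentially the same route as the paper: condition on the walk so that $|\mathcal E^c|$ is binomial in $|\mathcal R_n|\le n$ trials, bound the success probability by Markov's inequality on $|\xi|^m$, and apply the binomial tail estimate $P(Z\ge x)\le (enp/x)^x$. The only cosmetic differences are that the paper states the binomial bound directly as a ``Chernoff inequality'' without the $\binom{n}{k}\le(en/k)^k$ derivation, and does not separately discuss non-integer $x$.
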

\begin{proof}[Proof of \eqref{upperbound}]
Fix $\epsilon \in (0,1)$. We have
\begin{align*}
\prob \otimes P\left( \frac{T_n \sqrt{n}}{V_n L_{n,2}} \geq y_n  \right) &\leq \prob \otimes P\left( \frac{T_{n,1} \sqrt{n}}{V_{n,1} L_{n,2}} \geq \epsilon y_n  \right) \\
  &+ \prob \otimes P  \left( \frac{T_{n,2} \sqrt{n}}{V_{n,2} L_{n,2}} \geq (1-\epsilon) y_n/2  \right) \\
  &+  \prob \otimes P\left( \frac{T_{n,3} \sqrt{n}}{V_{n,3} L_{n,2}} \geq (1-\epsilon) y_n/2 \right) \\
  &\eqqcolon P_1 + P_2 + P_3.
\end{align*}
To bound $P_1$, we simply apply Lemma \ref{good-sum}
\begin{align*}
P_1 =  \prob \otimes P\left( \frac{T_{n,1} \sqrt{n}}{V_{n,1} L_{n,2}} \geq \epsilon y_n  \right) \leq \exp\left(-\frac{\epsilon^2}{2}y_n^2(1-o(1))\right).
\end{align*}
To bound $P_2$, we observe that by \eqref{selfintersect} and the Cauchy-Schwartz inequality,
\begin{align*}
\frac{T_{n,2} \sqrt{n}}{V_{n,2} L_{n,2}} \leq \frac{T_{n,2}}{V_{n,2}} =  \frac{\sum_{v \in \mathcal L \cap \mathcal E^c } \ell_n(v) \xi(v)}{\sqrt{\sum_{v \in \mathcal L \cap \mathcal E^c} \ell_n(v) \xi^2(v)}}\leq \sqrt{\sum_{v \in \mathcal L \cap \mathcal E^c} \ell_n(v)  } \leq \sqrt{\frac{4}{\lambda_d} \log n \vert \mathcal E^c \vert}.
\end{align*}
Combining the above inequality and Lemma \ref{scenery-lemma} yields
\begin{align*}
P_2  = \prob \left( \frac{T_{n,2} \sqrt{n}}{V_{n,2} L_{n,2}} \geq (1-\epsilon) y_n/2  \right)  &\leq \prob \left(\vert \mathcal E^c\vert \geq (1-\epsilon)^2\frac{\lambda_d}{16} \frac{y_n^2}{\log n}\right) \\
&\leq  \left( \frac{e \E \xi^4}{(1-\epsilon)^2\lambda_d /16} \frac{y_n^2 \log^9 n}{n}\right)^{(1-\epsilon)^2\frac{\lambda_d}{16}\frac{y_n^2}{\log n}} \\
&= \exp \left( -(1-\epsilon)^2\frac{\lambda_d}{16} y_n^2\left(\frac{\log(n/y_n^2)}{\log n}-o(1)\right) \right)\\
&\leq \exp \left( -(1-\epsilon)^2\frac{\lambda_d}{24} y_n^2(1-o(1)) \right).
\end{align*}
To bound $P_3$, we again use \eqref{selfintersect} and the Cauchy-Schwartz inequality
\begin{align*}
\frac{T_{n,3} \sqrt{n}}{V_{n,3} L_{n,2}} \leq  \frac{T_{n,3}}{V_{n,3}} =  \frac{\sum_{v \in  \mathcal L^c } \ell_n(v) \xi(v)}{\sqrt{\sum_{v \in \mathcal L^c} \ell_n(v)  \xi^2(v)}} \leq \sqrt{\sum_{v \in \mathcal L^c}  \ell_n(v)}= \sqrt{\mathscr L_n}.
\end{align*}
Applying this as well as Lemma \ref{localsum-lemma} gives us
\begin{align*}
P_3  = P \left( \frac{T_{n,3} \sqrt{n}}{V_{n,3} L_{n,2}} \geq (1-\epsilon) y_n/2  \right)  \leq P \left(\mathscr L_n \geq (1-\epsilon)^2 y_n^2/4\right) \leq M\exp \left( -(1-\epsilon)^2\frac{\lambda_d}{8} y_n^2\right).
\end{align*}
We conclude that for $d\geq 2$
\begin{align*}
\limsup_{n \to \infty}y_n^{-2}\log \prob \otimes P\left( \frac{T_n \sqrt{n}}{V_n L_{n,2}} \geq y_n \right) < -\min \left\{\frac{\epsilon^2}{2},(1-\epsilon)^2\frac{\lambda_d}{24},(1-\epsilon)^2\frac{\lambda_d}{8} \right\}.
\end{align*}
Recall from Lemma \ref{reg-moment} that $\lambda_d \uparrow \infty$ as $d \to \infty$. Hence for $d$ large enough we can let $\epsilon = 1-(24/\lambda_d)^{1/2}$, and get
\begin{align*}
\limsup_{n \to \infty}y_n^{-2}\log \prob \otimes P\left( \frac{T_n \sqrt{n}}{V_n L_{n,2}} \geq y_n \right) \leq - \frac{1}{2}( 1-(24/\lambda_d)^{1/2})^2.
\end{align*}
Letting $c_d = ( 1-(24/\lambda_d)^{1/2})^2$ finishes the proof.

\end{proof}
We are left to prove Lemmas \ref{good-sum} and \ref{scenery-lemma}. The first lemma is a self-normalized moderate deviation result; since our sum is over vertices with small local time, the proof is very similar to the i.i.d. regime. The proof of the second lemma is a straightforward Chernoff inequality.
\begin{proof}[Proof of Lemma \ref{good-sum}]
 Recall the local time statistics $\mathscr L_n = \sum_{v \in \mathcal L^c} \ell_n(v)$, $L_{n,2}^2 = \sum_{v \in V}\ell_n^2(v)$ and $L_{n,3}^3 = \sum_{v \in V}\ell_n^3(v)$. We begin by defining the atypical event
\begin{align*}
A = \left\{L_{n,2}^2> B_2 n \right\}\cup \left\{L_{n,3}^3> B_3 n \right\} \cup \{\mathscr L_n \geq y_n^2\log n \}.
\end{align*}
Since $y_n =o( n^{1/6})$, by Lemmas \ref{silt-lemma} and \ref{localsum-lemma} we have
\begin{align*}
    \limsup_{n \to \infty }y_n^{-2}\log P(A ) =-\infty.
\end{align*}
Hence we are left to bound
\begin{align*}
   \prob \otimes P \left( \left\{\frac{T_{n,1} \sqrt{n}}{V_{n,1} L_{n,2}} \geq y_n \right\} \cap A^c \right).
\end{align*}
Fix $\delta \in (0,1)$. We decompose our probability with respect to the size of $V_{n,1}$:
\begin{align*}
\prob \otimes P \left(\left\{\frac{T_{n,1} \sqrt{n}}{V_{n,1} L_{n,2}} \geq y_n\right\} \cap A^c\right) &\leq \prob\otimes P  \left( \{ V_{n,1}^2\leq \delta n \} \cap A^c\right)  + \prob\otimes P  \left(\left\{\frac{T_{n,1}}{ L_{n,2}} \geq \delta^{1/2} y_n \right\} \cap A^c\right) \\
&\eqqcolon I_1 + I_2. 
\end{align*}
We first show that $I_1$ is negligible. By the Chebyshev inequality, for any $\kappa>0$ we have
\begin{align*}
	\prob  (V_{n,1}^2 < \delta n ) & \leq \exp(\kappa  \delta n) \E [ \exp(-\kappa V_{n,1}^2)]  \\
	& = \exp(\kappa \delta n) \prod_{v \in \mathcal L} \E[ \exp(-\kappa \ell_n(v) \xi^2(v)\1\{v \in \mathcal E\})].
\end{align*}
By monotone convergence, we have $\E[\xi^2(v)\1\{v \in \mathcal E\}] = 1-o(1)$. Using this, and that $e^{-x} \leq 1- x+x^2/2$ for $x\geq 0$, we have
\begin{align*}
    \E [\exp(-\kappa \ell_n(v) \xi^2(v)\1\{v \in \mathcal E\})] &\leq \E[1-\kappa \ell_n(v) \xi^2(v)\1_{v \in \mathcal E} + {\kappa^2}\ell_n^2(v) \xi^4(v)\1\{v \in \mathcal E\}/2] \\
    &\leq 1 -\kappa \ell_n(v)(1-o(1))+ \E [\xi^4]\kappa^2\ell_n^2(v)/2  \\
    &\leq \exp( -\kappa \ell_n(v)(1-o(1)) + \E [\xi^4]\kappa^2\ell_n^2(v)/2).
 \end{align*}
On the event $A^c$, we have $\sum_{v \in \mathcal L}\ell_n(v) \geq n - y_n^2\log n $. We then get
\begin{align*}
  I_1  &\leq\exp( \kappa \delta n) E\left[\exp\left(-\kappa \sum_{v \in \mathcal L} \ell_n(v)(1-o(1))+ \E [\xi^4] \kappa^2 \sum_{v \in \mathcal L} \ell_n^2(v)/2\right)\1_{A^c}\right] \\
    & \leq  \exp(-\kappa  (1-\delta) n(1-o(1)) + B_2 \E [\xi^4] \kappa^2 n/2).
\end{align*}
Optimizing over $\kappa$ gives the bound
\begin{align*}
I_1 \leq \exp\left(-\frac{(1-\delta)^2 }{2B_2\E [\xi^4]} n(1-o(1)) \right),
\end{align*}  
and since $y_n =o(n^{1/6})$, we have for ever $\delta \in (0,1)$
\begin{align*}
    \limsup_{n \to \infty} y_n^{-2} \log I_1= -\infty.
\end{align*}
The rest of the proof is left to bound $I_2$. Applying Chebyshev's inequality with $\delta^{1/2}y_n$, we have
\begin{align*} 
I_2 &\leq \exp(-\delta y_n^2 )\cdot\E\otimes E\left[\exp(\delta^{1/2} y_n T_{n,1} /L_{n,2})\1_{A^c}\right ]  \\
&= \exp(-\delta y_n^2 )\cdot E\left[\prod_{v \in \mathcal L }\E\left[\exp\left(\delta^{1/2}y_n\frac{ \ell_n (v)}{L_{n,2}}  \xi(v)\1\{v \in \mathcal E\}\right)\right] \1_{A^c} \right].
\end{align*}
Since $L_{n,2}\geq  n^{1/2}$, for $v \in \mathcal L $ we have
\begin{align*}
    \delta^{1/2}y_n\frac{ \ell_n (v)}{L_{n,2}}  \xi(v)\1\{v \in \mathcal E\} \leq \frac{4 \delta^{1/2} }{\lambda _d\log n}.
\end{align*}
There exists a constant $C>0$ such that for any $x\leq 1$,
\begin{align*}
    e^x \leq 1+ x + x^2/2+C\vert x\vert^3.
\end{align*}
By this, and that $\E [\xi(v) \1\{v \in \mathcal E\}] \leq 0$, we have for large enough $n$
\begin{align*}
 I_2 \leq & \exp(-\delta y_n^2 ) E\left[ \prod_{v \in \mathcal L} \left(1+ \frac{\delta}{2}\frac{\ell^2_n(v)}{L^2_{n,2}} y_n^2 + O(1) \frac{\ell^3_n(v)}{L_{n,2}^3} y_n^3\right) \1_{A^c}\right]\\
  \leq & \exp(-\delta y_n^2 ) E\left[ \prod_{v \in \mathcal L} \exp \left(\frac{\delta}{2}\frac{\ell^2_n(v)}{L^2_{n,2}} y_n^2 + O(1) \frac{\ell^3_n(v)}{L_{n,2}^3} y_n^3\right) \1_{A^c}\right]\\
  \leq & \exp(-\delta y_n^2/2 ) E\left[ \exp \left(O(1) \frac{L_{n,3}^3}{L_{n,2}^3} y_n^3\right) \1_{A^c}\right].
 \end{align*}
Since $L^3_{n,3} \asymp n$ and $L^2_{n,2} \asymp n$ on the event $A^c$, as well as that $y_n = o(n^{1/6})$, we have the bound
 \begin{align*}
 \limsup_{n\to \infty} y_n^{-2} \log I_2 \leq -\frac{\delta}{2}.
  \end{align*}
Taking $\delta \uparrow 1$ finishes the proof.
\end{proof}
\begin{proof}[Proof of Lemma \ref{scenery-lemma}]
Observe that $$\left\{\1\left\{\vert\xi(v)\vert>\frac{\sqrt{n}}{y_n \log^2n}\right\}: v \in \mathcal R_n\right\}$$ are i.i.d. random variables with respect to $\prob$, and that by Markov's inequality we have 
\begin{align*}
    \prob\left(\vert \xi(v)\vert \geq \frac{\sqrt{n}}{y_n \log^2n}\right) \leq \E \vert\xi^m\vert\frac{y_n^m \log^{2m}(n)}{n^{m/2}}.
\end{align*}
Applying the Chernoff inequality to the binomial random variable $Z$ with parameters $n \in \N_0$ and $p \in [0,1]$, it follows that for all $x >0$ 
\begin{align*}
P(Z > x) \leq \left(\frac{enp}{x} \right)^{x}.
\end{align*}
Using this, and that $\vert \mathcal R_n \vert \leq n$ $P$-a.s, we have
\begin{align*}
    \prob \otimes P\left(\vert \mathcal E^c \vert \geq x \right)&= E\left[\prob\left(\sum_{v \in \mathcal R_n}\1\left\{\vert \xi(v)\vert>\frac{\sqrt{n}}{y_n \log^2n}\right\}>x \right) \right] \\
    & \leq E\left[\left(\frac{e\vert \mathcal R_n \vert \prob\left(\vert \xi(v)\vert \geq \frac{\sqrt{n}}{y_n \log^2n}\right)}{x} \right)^{x} \right] \\
    & \leq \left(\frac{e n \prob\left(\vert \xi(v)\vert \geq \frac{\sqrt{n}}{y_n \log^2n}\right)}{x} \right)^{x}  \\
    &\leq \left(e \E \vert \xi^m\vert \frac{y_n^m \log^{2m}(n)}{xn^{m/2-1}}\right)^x.
\end{align*}
\end{proof}
\section{Proof of the lower bound \eqref{lowerbound}} \label{lower-section}
The following proof is a straightforward application of the techniques used in the proof of  Theorem 2.2 in \cite{Feng}. Without loss of generality, assume $\E\xi^2 = 1$. Let $x,y$ and $b$ be positive numbers. By the the Cauchy-Schwartz inequality we have
\begin{align*}
    xy \leq \frac{1}{2}\left(\frac{x^2}{b} + y^2 b\right).
\end{align*}
Letting $b = y_nL_{n,2}/n$, $x = y_n L_{n,2}/\sqrt{n}$ and $y = V_n$, we get
\begin{align*}
    &\prob \otimes P\left(T_n \geq  V_n\frac{y_nL_{n,2}}{\sqrt{n}}\right) \\
    \geq& \prob \otimes P\left(T_n \geq \frac{1}{2b}\left(b^2V^2_n + y_n^2\frac{L^2_{n,2}}{n}\right)\right) \\
    =& \prob \otimes P\left(\sum_{v \in V} \ell_n(v)( 2b \xi(v)-b^2 \xi^2(v)) \geq y_n^2\frac{L^2_{n,2}}{n}\right) \\
    \geq& E\left[\prob\left(\sum_{v \in V} \ell_n(v)( 2b \xi(v)-b^2 \xi^2(v)) \geq y_n^2\frac{L^2_{n,2}}{n}\right)\1 \{  L_{n,2}^2 \leq B_2 n, L_{n,3}^3 \leq B_3 n\}\right].
\end{align*}
We are left to bound the inner probability, for which we will use Theorem 2 from \cite{nagaev2002lower}. Assume the random walk is fixed such that
\begin{align}\label{assumptions}
    L_{n,2}^2 \leq B_2 n \qquad \text{and} \qquad L_{n,3}^3 \leq B_3 n.
\end{align}
Defining $\eta(v) = 2b \xi(v) - b^2\xi^2(v)$, we have
\begin{align*}
    \prob\left(\sum_{v \in V} \ell_n(v)( 2b \xi(v)-b^2 \xi^2(v)) \geq y_n^2\frac{L^2_{n,2}}{n}\right) = \prob\left(\sum_{v \in V} \ell_n(v)(\eta(v)-\E\eta(v)) \geq 2y_n^2\frac{L^2_{n,2}}{n}\right).
\end{align*}
Define
\begin{align*}
    M_n^2 &= \sum_{v \in V} \ell_n^2(v) \E(\eta(v)-\E\eta(v))^2, & \Gamma_n &= \sum_{v \in V} \ell_n^3(v) \E\vert\eta(v) - \E\eta(v)\vert^3,\\
    Q_n &= \frac{\Gamma_n}{M_n^3}, & x &= \frac{2y_n^2L_{n,2}^2}{nM_n}.
\end{align*}
Since $L_{n,q}^q \geq n$ for $q=2$ and $q=3$, by \eqref{assumptions} we have $L^2_{n,2} \asymp n$ and $L^3_{n,3} \asymp n$. We thus get
\begin{align*}
    M_n^2 = L_{n,2}^2(4b^2 -4b^3\E\xi^3+ b^4(\E\xi^4-1)) \asymp y_n^2  \qquad
\text{and} \qquad \Gamma_n \asymp L_{n,3}^3 b^3 \asymp \frac{y_n^3}{n^{1/2}},
\end{align*}
which in turn implies
\begin{align*}
    Q_n \asymp \frac{1}{n^{1/2}} \qquad \text{and} \qquad x \asymp y_n \ll n^{1/2} \asymp Q_n^{-1}.
\end{align*}
It now follows from Theorem 2.1 in \cite{nagaev2002lower} that there exists positive constants $c_1$ and $c_2$ independent of $n$ such that
\begin{align*}
    \prob\left(\sum_{v \in V} \ell_n(v)(\eta(v)-\E\eta(v)) \geq \frac{2y_n^2L_{n,2}^2}{n}\right)  &\geq \left(1-\Phi\left(\frac{2y_n^2L_{n,2}^2}{nM_n}\right)\right)\left(1-c_1Q_nx\right)\exp\left(-c_2Q_nx^3\right) \\
    &=\left(1-\Phi\left(\frac{2y_n^2L_{n,2}^2}{nM_n}\right)\right)\left(1-o(1)\right) \exp\left(-o(1)\right),
    \end{align*}
since $Q_n x^3 \asymp y_n^3/n^{1/2}$ and $y_n = o(n^{1/6})$. Since $L_{n,2} \asymp n^{1/2}$ and $y_n = o(n^{1/4})$, we have
\begin{align*}
    M_n =2\frac{L_{n,2}^2y_n}{n}(1-o(1)).
\end{align*}
By Lemma \ref{silt-lemma} and that $y_n = o(n^{1/4}),$ we get $P\left(\{L_{n,2}^2\leq B_2n \} \cap \{L_{n,3}^3\leq B_3n \}\right) \sim 1$. Putting everything together, we have
\begin{align*}
       \prob \otimes P\left(\frac{T_n\sqrt{n}}{V_nL_{n,2}} \geq y_n\right) &\geq (1-o(1))(1-\Phi(y_n))P\left(\{L_{n,2}^2\leq B_2n \} \cap\{L_{n,3}^3\leq B_3n \} \right)\\
        &\sim 1-\Phi(y_n).
\end{align*}
This finishes the proof.
\section{Local time for $\Z^d$} \label{Zdlocaltimesection}
We review the necessary concentration inequalities required for the proof of Theorem \ref{Zdupperbound}. The following inequality was provided in Proposition 3.3 in \cite{asselah2019moderate}.
\begin{lemma} \label{Zdlevelset}
Define $L_n(t) = \vert \{z \in \Z^d: \ell_n(z)>t\} \vert$. There exists positive constants $c_1, c_2,c_3$ such that for $t > c_1 \log n$ and $u \geq 1$, we have
\begin{align*}
P(L_n(t)>u)  \leq c_3 \exp(-c_2\cdot t u^{1-2/d}).
\end{align*}
\end{lemma}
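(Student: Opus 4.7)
My approach is to reduce $\{L_n(t) > u\}$ to an occupation-time estimate for a deterministic set and then apply a Khasminskii-type exponential moment bound. On $\{L_n(t) > u\}$ one may extract a set $A \subset \mathcal R_n$ with $|A| = u$ and $T_A := \sum_{z \in A} \ell_n(z) \geq ut$. For any fixed finite $A$, the classical Kac--Khasminskii inequality (a geometric series expansion using the strong Markov property) yields
\[
\sup_y E_y[\exp(\lambda T_A)] \leq \bigl(1 - \lambda \sup_y G_A(y)\bigr)^{-1},
\]
where $G_A(y):=\sum_{z \in A}G(y,z)$, $G$ is the random walk Green's function, and $\lambda \sup_y G_A(y) < 1$.

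For $d \geq 3$ the pointwise bound $G(y,z)\leq C|y-z|^{-(d-2)}$ combined with a rearrangement argument (the worst case being $A$ a ball of radius $\sim u^{1/d}$) yields the dimensional estimate $\sup_y G_A(y) \leq Cu^{2/d}$. Applying exponential Chebyshev with $\lambda = 1/(2Cu^{2/d})$ then gives, for each fixed candidate set $A$ with $|A|=u$,
\[
P(T_A \geq ut) \leq 2\exp\bigl(-c\,t\,u^{1-2/d}\bigr).
\]

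The main obstacle is the union bound over candidate sets $A \subset \mathcal R_n$. A naive count of $\binom{|\mathcal R_n|}{u}\leq \exp(u\log(en/u))$ candidates produces an entropy factor that the single-set tail absorbs only when $t \gtrsim u^{2/d}\log n$, which is strictly stronger than the stated hypothesis $t > c_1 \log n$ once $u$ is large. To close the argument under the weaker hypothesis one needs to group candidate sets by their geometric support, exploiting that any cluster of heavy points is essentially localized in a ball of radius $\sim u^{1/d}$, so that the number of geometrically distinct configurations is controlled by $\exp(Cu)$ times a factor accounting for the position of the ball in $\mathcal R_n$. This packing/capacity-based refinement is the route taken in \cite{asselah2019moderate}, and the hypothesis $t > c_1\log n$ with $c_1$ sufficiently large is precisely what is needed to absorb the remaining logarithmic factors at the small-$u$ end of the union bound.
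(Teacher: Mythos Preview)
The paper does not prove this lemma at all; it simply quotes Proposition~3.3 of \cite{asselah2019moderate} as a black box. Your proposal is therefore not competing with any argument in the paper, but rather sketching what that cited proposition does.

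Your outline of the single-set bound via Khasminskii plus the rearrangement estimate $\sup_y G_A(y)\le C u^{2/d}$ is correct and standard, and you correctly diagnose that the na\"{\i}ve union bound over $\binom{n}{u}$ subsets only works when $t\gtrsim u^{2/d}\log n$, not when $t\gtrsim\log n$. Where your sketch becomes imprecise is the heuristic that ``any cluster of heavy points is essentially localized in a ball of radius $\sim u^{1/d}$.'' That is not true in general: the set $\{z:\ell_n(z)>t\}$ need not be connected or even approximately ball-shaped. The actual mechanism in \cite{asselah2019moderate} is a capacity argument---one shows that a set carrying that much occupation time must have capacity at least of order $u^{1-2/d}$, and then uses that the probability the walk hits a set of capacity $\kappa$ at least $m$ times decays like $e^{-cm\kappa}$ (iterated excursions). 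This bypasses the combinatorial union bound entirely rather than refining it via geometric grouping. Since you end by deferring to \cite{asselah2019moderate} anyway, your proposal is not wrong so much as it stops at exactly the point where the real work begins; functionally it amounts to the same citation the paper makes.
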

Based on this last lemma, we easily get the following estimate which will be needed in the proof of Theorem \ref{Zdupperbound}.
\begin{lemma} \label{Zdlocalsum}
Suppose $y_n^2 \gg \log n$ and let $t_* =   y_n^{4/(d+2)}(\log n)^{d/(d+2)}$. Then there exists a positive constant $C_1$ such that
\begin{align*}
\limsup_{n \to \infty} y_n^{-2d/(d+2)}(\log n)^{-2/(d+2)}P\left(\sum_{z: \ell_n(z)>t_*} \ell_n(z) \geq y_n^2\right) \leq -C_1.
\end{align*}
\end{lemma}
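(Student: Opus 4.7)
The plan is a dyadic level-set decomposition of $\{z : \ell_n(z) > t_*\}$ combined with Lemma \ref{Zdlevelset}. Set $t_k = 2^k t_*$ for $k \geq 0$, so that
$$\sum_{z:\, \ell_n(z) > t_*} \ell_n(z) \;\leq\; \sum_{k \geq 0} 2 t_k \, L_n(t_k).$$
I apply pigeonhole with geometric weights $w_k = (1-\beta)\beta^k$, where $\beta = 2^{-2/(d-2)} \in (0,1)$ is chosen specifically to calibrate against the exponent in Lemma \ref{Zdlevelset}. The event $\{\sum_{z:\, \ell_n(z) > t_*} \ell_n(z) \geq y_n^2\}$ is then contained in $\bigcup_{0 \leq k \leq K_{\max}} \{L_n(t_k) \geq u_k\}$, where $u_k := w_k y_n^2/(2 t_k) \asymp (\beta/2)^k y_n^2/t_*$, and $K_{\max} = O(\log n)$ is the cutoff given by $t_{K_{\max}} \asymp n$ (higher $k$ do not contribute since $\ell_n(z) \leq n$). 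Let $k_0$ be the largest $k$ with $u_k \geq 1$; a short computation with the relation $(2/\beta)^{k_0} \asymp y_n^2/t_*$ yields $t_{k_0} \asymp y_n^{2(d-2)/d} t_*^{2/d} \asymp y_n^{2d/(d+2)}(\log n)^{2/(d+2)}$.

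For $0 \leq k \leq k_0$, the hypothesis $t_k \geq c_1 \log n$ of Lemma \ref{Zdlevelset} holds because $t_* \gg \log n$ under $y_n^2 \gg \log n$, and the lemma gives
$$P(L_n(t_k) \geq u_k) \;\leq\; c_3 \exp\bigl(-c_2 t_k u_k^{1-2/d}\bigr).$$
A direct calculation shows $t_k u_k^{1-2/d} \asymp \bigl(2(\beta/2)^{(d-2)/d}\bigr)^k \cdot t_*^{2/d} y_n^{2(d-2)/d}$. The tuning $\beta = 2^{-2/(d-2)}$ makes $2(\beta/2)^{(d-2)/d} = 2^{2/d}\beta^{(d-2)/d} = 1$, so this exponent is $k$-independent, and substituting the definition of $t_*$ gives $t_k u_k^{1-2/d} \asymp y_n^{2d/(d+2)}(\log n)^{2/(d+2)}$. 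Thus each such term is bounded by $\exp\bigl(-c \, y_n^{2d/(d+2)}(\log n)^{2/(d+2)}\bigr)$.

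For $k_0 < k \leq K_{\max}$ the threshold $u_k$ drops below $1$, so $\{L_n(t_k) \geq u_k\} \subseteq \{L_{n,\infty} > t_k\}$, and the crude bound $P(L_{n,\infty} > t_k) \leq n \exp(-c(t_k-1))$ (from standard SRW return-time estimates, analogous to Lemma \ref{max-lemma}) applies. Since $t_k \geq 2 t_{k_0} \asymp y_n^{2d/(d+2)}(\log n)^{2/(d+2)} \gg \log n$, these tail terms are also bounded by $\exp\bigl(-c' \, y_n^{2d/(d+2)}(\log n)^{2/(d+2)}\bigr)$. The final union bound over $K_{\max}+1 = O(\log n)$ scales is absorbed because $y_n^{2d/(d+2)}(\log n)^{2/(d+2)} \gg \log n$ under $y_n^2 \gg \log n$, yielding the claim.

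The main obstacle I anticipate is pinning down the correct weight $\beta$. A naive choice such as $w_k = 2^{-k-1}$ leaves a residual factor $(2^{2/d}(1/2)^{(d-2)/d})^k$ in the exponent, which decays for $d \geq 5$ and gives sub-optimal bounds at large dyadic scales; only the specific choice $\beta = 2^{-2/(d-2)}$ calibrated to the tail exponent $1 - 2/d$ in Lemma \ref{Zdlevelset} makes every scale contribute the same rate $y_n^{2d/(d+2)}(\log n)^{2/(d+2)}$, which is exactly the target.
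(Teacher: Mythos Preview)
Your proposal is correct and follows essentially the same route as the paper. The paper also decomposes into dyadic shells $\mathcal D_k=\{2^k t_*<\ell_n\le 2^{k+1}t_*\}$, allocates mass via $a_k=\epsilon\,y_n^2\,2^{-2k/(d-2)}$ (which is exactly your choice $w_k y_n^2$ with $\beta=2^{-2/(d-2)}$), applies Lemma~\ref{Zdlevelset} for small $k$, and handles the remaining scales in one shot by the event $\{L_{n,\infty}>2^{K+1}t_*\}$ with $2^{K+1}t_*\asymp y_n^{2d/(d+2)}(\log n)^{2/(d+2)}$, i.e.\ the paper's cutoff $K$ coincides with your $k_0$.
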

\begin{proof}
Define the sets
\begin{align*}
\mathcal D_k = \{z \in \Z^d: 2^k t_*< \ell_n(z) \leq 2^{k+1}t_* \}
\end{align*}
for $k =0,\ldots, K$, where $K$ satisfies $2^{K+1} t_* = y_n^{2d/(d+2)}(\log n)^{2/(d+2)}$. Let $a_k = \epsilon \cdot y_n^2 2^{-2k/(d-2)}$, where $\epsilon>0$ is chosen such that $\sum_{k=0}^\infty a_k \leq y_n^2$. We have
\begin{align*}
P\left(\sum_{z: \ell_n(z)> t_*} \ell_n(z) \geq y_n^2\right)  &\leq \sum_{k=0}^K P\left(\sum_{z \in \mathcal D_k} \ell_n(z)\geq a_k\right) + P\left(L_{n,\infty} > 2^{K+1} t_*\right).
\end{align*}
Reviewing the proof of Lemma \ref{max-lemma}, we see that the conclusion holds for any graph $G$ that satisfies $\inf_{v \in V}P_v(T_v^+ = \infty)> 0$. Since this applies to the graph $\Z^d$ for $d \geq 3$, we can bound the second term on the right-hand side by Lemma \ref{max-lemma}. The first term on the right-hand side is bounded by
\begin{align*}
\sum_{k=0}^K P\left(\vert \mathcal D_k \vert \geq \frac{a_k}{2^{k+1}t_*}\right) &\leq \sum_{k=0}^K \exp\left(-\left(\frac{a_k}{2}\right)^{1-2/d}(2^k t_*)^{2/d}\right)\\
&\leq (K+1)\exp(-C_2y_n^{2d/(d+2)}(\log n)^{2/(d+2)}),
\end{align*}
where the first inequality follows from Lemma \ref{Zdlevelset} and the fact that $t_* \gg \log n$ since we assume $y_n^2 \gg \log n$. We finish by observing that combinatorial factor is negligible since $K = O(\log y_n)$.
\end{proof}
As in the proof of \ref{upperbound}, we will need large deviations for the self-intersection local time $L_{n,2}^2 = \sum_{z \in \Z^d} \ell_n^2(z)$. The following result follows from Proposition 1.1 and Remark 1.3 in \cite{asselah2008large}, as well as Proposition 1.5 in  \cite{Asselah}.
\begin{lemma} \label{Zd-silt}
Let $\{S_n : n \in \N_0 \}$ be the simple random walk on $\Z^d$ and suppose $d \geq 3$. For $y > 2G(0)-1$, we have
\begin{align*}
 \limsup_{n \to \infty} n^{-1/3} \log P(L^2_{n,2} \geq y\cdot n) <0.
\end{align*}
\end{lemma}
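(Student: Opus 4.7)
The statement is a large-deviation upper bound for the self-intersection local time of simple random walk on $\Z^d$, $d \geq 3$, at rate $n^{1/3}$ uniformly across dimensions. The paper flags that it repackages results of Asselah \cite{asselah2008large,Asselah}, but a self-contained outline consistent with the level-set approach developed in Section \ref{Zdlocaltimesection} is what I would aim for.

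Since $\E[L_{n,2}^2]/n \to 2G(0) - 1$, it suffices to show that an excess of order $n$ is exponentially unlikely at rate $n^{1/3}$. The plan is to fix a cutoff $t_* = A \log n$ with $A$ large enough for Lemma \ref{Zdlevelset} to apply, and split
\begin{align*}
L_{n,2}^2 = \sum_{z:\,\ell_n(z) \leq t_*} \ell_n^2(z) + \sum_{z:\,\ell_n(z) > t_*} \ell_n^2(z) =: S_1 + S_2.
\end{align*}
For $S_2$, I would dyadically decompose via $\mathcal{D}_k = \{z : 2^k t_* < \ell_n(z) \leq 2^{k+1} t_*\}$ for $k = 0,\dots,K$ with $2^K t_* \asymp L_{n,\infty}$, use the pointwise bound $\sum_{z \in \mathcal{D}_k} \ell_n^2(z) \leq (2^{k+1} t_*)^2 L_n(2^k t_*)$, distribute a total excess budget $\delta_k n$ across scales with $\sum_k \delta_k \leq \epsilon$, and invoke Lemma \ref{Zdlevelset}. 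Optimizing the exponent $t u^{1-2/d}$ subject to the constraint $t^2 u \asymp \delta_k n$ produces a rate at least $n^{(d-2)/d} \geq n^{1/3}$ for $d \geq 3$; a union bound across the $O(\log n)$ dyadic scales costs only a logarithmic factor, which is negligible compared to $n^{1/3}$.

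For $S_1$, since the truncated summands are bounded by $t_*^2 = O(\log^2 n)$ and $\E[S_1] \leq (2G(0) - 1 + o(1))n$, I would target a sub-Gaussian concentration inequality of the form $P(S_1 - \E S_1 \geq \epsilon n) \leq \exp(-c\epsilon^2 n / \mathrm{polylog}(n))$, which is much stronger than the required subexponential rate. The cleanest path is a bounded-difference or martingale argument using that resampling a single step of the walk changes the truncated SILT by at most $O(t_*)$ once one conditions on $S_2$ being small.

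The main obstacle is exactly the $S_1$ concentration: because the local times at different sites are correlated through the single trajectory, naive i.i.d.\ Chernoff is unavailable. The route taken by Asselah is to establish sharp exponential-moment bounds for the truncated SILT directly from estimates on the Green's function, exploiting transience---which is precisely what makes the $d \geq 3$ hypothesis essential. Either route is technical but standard, and yields the required subexponential rate.
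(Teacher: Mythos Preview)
The paper does not prove this lemma at all; it simply records that the statement follows from Proposition~1.1 and Remark~1.3 in \cite{asselah2008large} together with Proposition~1.5 in \cite{Asselah}. Your fallback to ``the route taken by Asselah'' is therefore exactly what the paper does, and your $S_2$ dyadic argument via Lemma~\ref{Zdlevelset} is sound (the claimed rate $n^{(d-2)/d}$ is not literally what drops out for $d\ge 5$, but the conclusion $\ge n^{1/3}$ is correct).

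The genuine gap is in your $S_1$ step. For a random walk the \emph{increments} $X_1,\dots,X_n$ are independent, not the positions; resampling $X_i$ shifts every $S_k$ with $k\ge i$ by a fixed vector, so the change in $L_{n,2}^2$ is governed by the cross term $\sum_z \ell_{[0,i-1]}(z)\,\ell_{[i,n]}(z)$, which under $L_{n,\infty}\le t_*$ is only bounded by $t_*\min(i,n-i)$, not by $O(t_*)$. This gives $\sum_i c_i^2$ of order $t_*^2 n^3$, and McDiarmid yields nothing. If instead you set up the natural martingale via $L_{k,2}^2-L_{k-1,2}^2 = 1+2\ell_{k-1}(S_k)$, the martingale differences are indeed $O(t_*)$ on the good event, but the compensator $\sum_k \E[\ell_{k-1}(S_k)\mid \mathcal F_{k-1}]$ is itself a random self-intersection-type functional whose fluctuations you would still need to control---the argument becomes circular. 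This is precisely why Asselah's proof proceeds through direct exponential-moment bounds on the truncated SILT using Green's-function estimates rather than through bounded differences, and why the paper is content to cite that work.
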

\section{Proof of Theorem \ref{Zdupperbound}} \label{Zdupperboundproof}
We begin by defining the sets
\begin{align*}
\mathcal E^{(n)} =\mathcal E = \left\{z \in \mathcal R_n: \xi(z) \leq \frac{\sqrt{n}}{y_n \log^2 n}\right\} \qquad \text{and} \qquad
\mathcal L^{(n)} =\mathcal L= \left\{z \in \Z^d: \ell_n(z) \leq y_n^{4/(d+2)}(\log n)^{d/(d+2)}\right\}.
\end{align*}
We define the partial sums
\begin{align*}
    T_{n,1} &= \sum_{z\in \mathcal L \cap \mathcal E} \ell_n(z)\xi(z), & V^2_{n,1} &= \sum_{z\in \mathcal L \cap \mathcal E} \ell_n(z)\xi^2(z), \\
    T_{n,2} &= \sum_{z\in \mathcal L \cap \mathcal E^c} \ell_n(z)\xi(z), & V^2_{n,2} &= \sum_{z\in \mathcal L \cap \mathcal E^c} \ell_n(z)\xi^2(z)\\
    T_{n,3} &= \sum_{z\in \mathcal L^c } \ell_n(z)\xi(z), & V^2_{n,3} &= \sum_{z\in \mathcal L^c } \ell_n(z)\xi^2(z).
\end{align*}
Applying \eqref{selfintersect}, we have
\begin{align*}
&\prob \otimes P\left(T_n \sqrt{n}/(V_n L_{n,2}) \geq y_n\right) \leq \prob \otimes P\left(T_n/V_n \geq y_n\right) \\
\leq &\prob \otimes P(T_{n,1}/V_{n} \geq y_n/3)+\prob \otimes P(T_{n,2}/V_{n} \geq y_n/3) +\prob \otimes P(T_{n,3}/V_{n} \geq y_n/3).
\end{align*}
 As in the proof of \eqref{upperbound}, we apply the Cauchy-Schwartz inequality and get that this bounded by
\begin{align*}
\prob \otimes P(T_{n,1}/V_{n} \geq y_n/3)+\prob \otimes P \left(\vert \mathcal E^c \vert \geq y_n^{2d/(d+2)}\log n^{-d/(d+2)}/9\right) +P\left(\sum_{z \in \mathcal L^c} \ell_n(z) \geq y_n^2/9\right).
\end{align*}
Reviewing the proof of Lemma \ref{scenery-lemma}, we see that the conclusion holds for any random walk that satisfies $\vert \mathcal R_n \vert \leq n$. Since this holds for the simple random walk on $\Z^d$, we can apply Lemma \ref{scenery-lemma} with the assumption $\E \xi^4<\infty$, to get
\begin{align*}
\limsup_{n \to \infty } y_n^{-2d/(d+2)}(\log n)^{-2/(d+2)} \log \prob \otimes P (\vert \mathcal E^c \vert \geq y_n^{2d/(d+2)}\log n^{-d/(d+2)}/9) <0.
\end{align*}
By Lemma \ref{Zdlocalsum}, we have
\begin{align*}
\limsup_{n \to \infty} y_n^{-2d/(d+2)}(\log n)^{-2/(d+2)} \log P\left(\sum_{z \in \mathcal L^c} \ell_n(z) \geq  y_n^2/9\right) < 0.
\end{align*}
The rest of the proof is nearly identical to the proof of Lemma \ref{good-sum}. For $B > 2G(0)-1$, we have
\begin{align*}
\prob \otimes P\left( T_{n,1}/V_n\geq y_n\right) &\leq  \prob \otimes P \left( \{ V_n^2\leq  n/4 \} \cap \{ L_{n,2}^2 \leq B n \} \right)\\
 & + \prob \otimes P\left( \{ T_{n,1}/n^{1/2} \geq  y_n/2  \} \cap \{L^2_{n,2} \leq B n \}\right) + P(L_{n,2}^2 > B n) \\
&\eqqcolon  I_1 + I_2 + I_3.
\end{align*}
By Lemma \ref{Zd-silt} and that $y_n = o(n^{1/6})$, we have 
\begin{align*}
  \limsup_{n \to \infty } y_n^{-2d/(d+2)}(\log n)^{-2/(d+2)} \log I_3= -\infty,
\end{align*}
and by the same proof as in Lemma \ref{good-sum}, we have
\begin{align*}
   \limsup_{n \to \infty } y_n^{-2d/(d+2)}(\log n)^{-2/(d+2)}  \log I_1= -\infty,
\end{align*}
and so we are left to bound $I_2$. By Chebyshev's inequality, for any $\alpha_n>0$ we have
\begin{align*}
I_2 \leq \exp(-y_n \alpha_n/2)\cdot E\left[\prod_{z \in \mathcal L }\E\left[\exp\left( \ell_n (z)\xi(z)\frac{\alpha_n}{n^{1/2}}  \1\{z \in \mathcal E\}\right)\right] \1\{L_{n,2}^2  \leq B_2 n\}\right].
\end{align*}
We set $\alpha_n  = y_n^{(d-2)/(d+2)} (\log n)^{2/(d+2)}$, and observe that there exists $C>0$ such that for $z \in \mathcal L $, we have
\begin{align*}
    \ell_n(z) \xi(z)\frac{\alpha_n}{n^{1/2}}  \1\{z \in \mathcal E\} \leq C/\log n.
\end{align*}
There exists a constant $C>0$ such that for $x\leq 1$ we have $e^x \leq 1+x+C x^2$. Combining both facts yields the inequality
\begin{align*}
 \E \left[ \exp\left(\ell_n(z)\xi(z)\frac{\alpha_n}{n^{1/2}}  \1\{z \in \mathcal E\}\right) \right] &\leq  \E \left[1+ \ell_n(z)\xi(z)\frac{\alpha_{n}}{n^{1/2}}  \1\{z \in \mathcal E\}+ C\frac{\ell^2_n(z)}{n} \xi^2(z) \alpha_n^2 \1\{z \in \mathcal E\} \right]\\
  &\leq \exp\left( C  \frac{\ell_n^2(z)}{n}\alpha_n^2\right).
 \end{align*}
 We thus have
\begin{align*}
   I_2&\leq \exp(- y_n^{2d/(d+2)}(\log n)^{2/(d+2)}/2+ C B_2 y_n^{(2d-4)/(d+2)}(\log n)^{4/(d+2)}).
\end{align*}
Observe that $y_n^{2d/(d+2)}(\log n)^{2/(d+2)} \gg y_n^{(2d-4)/(d+2)}(\log n)^{4/(d+2)}$ precisely when $y^2_n \gg \log n$, which completes the proof.

\newpage

\noindent{\bf Acknowledgements.}
I would like to thank my M.Sc. advisor Ofer Zeitouni for his guidance and support throughout all stages of this work. This project has received funding from the European Research Council (ERC) under the European Union’s Horizon 2020 research and innovation programme (grant agreement No. 692452). Thank you to both reviewers for providing detailed feedback that greatly improved the content and flow of the manuscript.

\bibliographystyle{acm}
\bibliography{references}

\end{document}